\documentclass[11pt]{article}
\date{}
\usepackage{amssymb}
\usepackage{amsmath}
\usepackage{amsthm}
\usepackage{graphicx}
\usepackage{indentfirst}
\allowdisplaybreaks[4]
\newtheorem{theorem}{Theorem}

\newtheorem{corollary}[theorem]{Corollary}

\newtheorem{definition}[theorem]{Definition}

\newtheorem{lemma}[theorem]{Lemma}

\newtheorem{remark}[theorem]{Remark}

\numberwithin{equation}{section}
\numberwithin{theorem}{section}
\newcommand{\keywords}[1]{\par\noindent\textbf{Keywords:} #1}
\newcommand{\subjclass}[2]{
  \par\noindent\textbf{Mathematics Subject Classification} #2}

\begin{document}

\title{On nonnegative solutions of the parabolic differential inequality with $(p,q)$-Laplace on Riemannian manifolds }

\author{Biqiang Zhao\\
\small Beijing International Center for Mathematical Research,
\\
\small Peking University, Beijing, China\\ \small 2306394354@pku.edu.cn}
            
\maketitle
\setlength{\parindent}{2em}

\begin{abstract}
In this paper, we establish Liouville-type theorems for parabolic differential inequalities with $(p,q)-$Laplacian operator on Riemannian manifolds. By a test function argument, we establish nonexistence results under suitable weighted volume growth assumptions involving potential. In particular, we can obtain nonexistence results for a wider class of parabolic inequalities.

\end{abstract}

\keywords{ Liouville theorem, parabolic quasilinear inequality,  $(p,q)$- Laplacian}
\subjclass :{ 35K59, 35K92, 53C20, 35R01 }


\section{Introduction}
\label{1}
 In this paper, we are concerned with the parabolic quasilinear inequalities
\begin{align}  
    \begin{cases}
        \partial_tu-\Delta_pu-\Delta_qu\geq V(x,t)u^s & in\quad   M\times (0,\infty) 
           \\
           u=u_0\geq 0  & in \quad  M\times \{0\}, \\
    \end{cases} 
\end{align}
where $(M,g)$ is a complete, non-compact, Riemannian manifold with the metric $g$ and $\Delta_z u=\mathrm{div}(|\nabla u|^{z-2}\nabla u),\ z\in \{p,q\}$. Moreover, we assume that $1<q\leq p$, $s>\mathrm{max}\{1,p-1\}$ and $ V(x,t)> 0$ a.e. on $M\times (0,\infty) $.
\par
Liouville theorems for nonlinear parabolic equations has received considerable attention in the literature. In $\mathbb{R}^n$, the following semilinear parabolic Cauchy problem 
\begin{align}  
    \begin{cases}
       \partial_tu-\Delta u= u^s  &in\quad \mathbb{R}^n\times (0,\infty) \\
           u=u_0\geq 0  &in\quad \mathbb{R}^n\times \{0\} ,\\
    \end{cases} 
\end{align}
where $s>1,u_0\in L^\infty(\mathbb{R}^n)$ has been widely studied. Due to the celebrated work of Fujita \cite{Fu}, (1.2) has no global positive solution if $1<s<1+\frac{2}{n}$. On the contrary, for $s>1+\frac{2}{n}$, (1.2) exists a positive solution. In the critical case $ s=1+\frac{2}{n}$, this problem was settled by Hayakawa \cite{Ha} for $n=1,2$ and Kobayashi-Sirao-Tanaka \cite{KST} for any natural number. Recently, Quittner \cite{Qu} considered the nonlinear heat equation
\begin{align*}
    \partial_tu-\Delta u= u^s,  \quad in\quad \mathbb{R}^n\times (-\infty,\infty)
\end{align*}
and proved that the equation posses no positive solution if 
\begin{align*}
     1<s<\bar{s}=
    \begin{cases}
       \infty,  &if \quad n\leq 2, \\
          \frac{n+2}{n-2},   &if \quad n> 2.\\
    \end{cases} 
\end{align*}
This result corresponds to the stationary solution in \cite{GS}.
\par
Liouville type results can also be generalized to parabolic inequalities, for instance, the inequalities 
\begin{align}  
    \begin{cases}
       \partial_tu-\Delta_pu\geq u^s  &in\quad \mathbb{R}^n\times (0,\infty) \\
           u=u_0\geq 0  &in\quad \mathbb{R}^n\times \{0\} .\\
    \end{cases} 
\end{align}
 Mitidieri and Pohozaev \cite{MP} proved that (1.3) does not admit nontrivial nonnegative weak solutions, provided that
\begin{align*}
   p>\frac{2n}{n+1},\quad \mathrm{max}\{p-1,1\}<s\leq p-1+\frac{p}{n}. 
\end{align*}
\par
We can also investigate parabolic equations or inequalities on more general spaces. In \cite{Zh}, Zhang generalized the results of Fujita to noncompact Riemannian manifolds $M^n$ with $n\geq 3$.  Bandle, Pozio and Tesei \cite{BPT}  studied the semilinear Cauchy problem
\begin{align}  
    \begin{cases}
       \partial_tu-\Delta u= h(t)u^s  &in\quad \mathbb{H}^n\times (0,\infty) \\
           u=u_0\geq 0  &in\quad \mathbb{H}^n\times \{0\} .\\
    \end{cases} 
\end{align}
They proved that if $h(t)=1$ or 
\begin{align*}
    \alpha_1 t^v \leq h(t)\leq \alpha_2t^v,\quad t>t_0
\end{align*}
for some $0< \alpha_1\leq \alpha_2, t_0>0 $ and $v>-1$, then (1.4) exists global bounded solutions. For more results, one can refer to \cite{Pu1,Pu2,GSXX,MMG,MMP2,MMP3} and references therein.
\par
Let us mention that nonexistence results of nonnegative nontrivial solutions  to elliptic equations and inequalities by using the upper bound of volume of any geodesic ball. This idea was due to Cheng and Yau \cite{CY}. They showed 
that if $ V_x(B_R)\leq C R^2$ holds for all large enough $R$, then nonnegative superharmonic function on $M$ is identically constant. Here $ V_x(B_R)$ is the volume of geodesic ball of radius $R$ centered at $x$. This idea was developed in \cite{AS,GK} to investigate the differential inequality
\begin{align}
    \mathrm{div}(A(x)\nabla u)+B(x)u^s\leq 0.
\end{align}
More precisely, with $A(x)=B(x)=1$, Grigor’yan and Sun \cite{AS} proved that any nonnegative weak solution of (1.5) is equal to 0, provided that 
\begin{align*}
    V_x(B_R)\leq C R^{\frac{2s}{s-1}}(\mathrm{log}\ R)^{\frac{1}{s-1}}
\end{align*}
for $R$ large enough. The exponents on $R$ and $\mathrm{log}\ R$ are sharp. Otherwise, Grigor’yan and Sun \cite{AS} showed that there exists some model manifold which satisfies the volume growth condition and admits positive solution to (1.5) with $A(x)=B(x)=1$. In \cite{GSV}, Grigor’yan, Sun and Verbitsky obtained the necessary and sufficient condition for the existence of positive solutions in terms of Green function of $\Delta$.  Later, Mastrolia, Monticelli and Punzo \cite{MMP1} studied the existence of positive solutions to the class of elliptic differential inequalities
\begin{align}
    \frac{1}{a(x)}\mathrm{div}(a(x)|\nabla u|^{p-2}\nabla u)+V(x)u^s\leq 0
\end{align}   
and showed that the potential function gives a direct influence on the volume
growth. For parabolic differential inequalities, Mastrolia, Monticelli and Punzo \cite{MMP2} investigated a class quasilinear parabolic inequalities
with a potential on Riemannian manifolds. In \cite{MMP3}, Mastrolia, Monticelli and Punzo obtained Liouville type theorems for parabolic inequalities with a potential in bounded domains.
\par
In the last few years, the $(p,q)$-Laplace operator attracts a lot of attention. The idea of studying such operators comes from the problems of the calculus of variations and nonlinear elasticity theory, cf. \cite{Ma1,Ma2,Zh1,Zh2}.  In \cite{BT}, Bobkov and Tanaka studied the existence and non-existence of positive solutions for the $(p,q)$-Laplace equations with two parameters in a bounded domain. Wang and Zhang \cite{WZ} obtained the gradient estimates for solutions to nonlinear elliptic
equation driven by the $(p,q)$-Laplace operator. Recently, Bhakta, Biswas and Filippucci \cite{BBF} established several Liouville-type theorems for differential quasilinear inequalities with $(p,q)$-Lapalce in the entire $\mathbb{R}^n$ (or an exterior domain). In \cite{BBF2}, Bhakta, Biswas and Filippucci obtained Liouville theorems for ($p,q)$-Laplace elliptic equations with source terms involving gradient nonlinearity. In \cite{Zhao}, the author studied the nonnegative solutions of the differential inequality with
$(p,q)$-Laplacian operator on Riemannian manifolds.
\par
The aim of this paper is to obtain Liouville-type theorems for quasilinear inequalities (1.1) on Riemannian manifolds.
\par
Throughout the paper, we assume that $1<q\leq p$ and $r(x)$ is the Riamannian distance from $x$ to a fixed point $x_0\in M$. Denote by $B_R$ the ball centered at $x_0$ with radius $R$. Since the constant $C$ is not important, it may vary at different occurrences.
\par
Before starting with the main theorems, we give the following definitions.

\begin{definition}
    Let $p\geq q>1, s>p-1$, $V>0$ a.e. on $M\times [0,\infty)$ and $V\in L^1(M\times [0,\infty))$. For $R>0$ and $\theta_1,\theta_2\geq 1$, let $S=M\times [0,\infty)$ and 
    \begin{align*}
        E_R=\{(x,t)\in S : t^{\theta_1}+r^{\theta_2}(x)\leq R^{\theta_2}\}.
    \end{align*}
    For $z\in \{p,q\}$, we define 
    \begin{align*}
        \bar{s}_1&=\frac{s}{s-1}\theta_2, & \bar{s}_2&=\frac{1}{s-1},
        \\
        \bar{s}_{3,z}&=\frac{sz}{s-z+1}\theta_2, & \bar{s}_{4,z}&=\frac{z-1}{s-z+1}.
    \end{align*}
    We introduce the following volume growth conditions HP1 and HP2, which we will derive our nonexistence results.
    \par
    HP1: Assume that: (1) there exist $\theta_1,\theta_2\geq1$, $C_0,C>0$ and $s_2\in [0,\bar{s}_2)$ such that for every $R>0$ large enough and every $\epsilon>0$ small enough
    \begin{align}
        \int\int_{E_{2^{1/\theta_2}R}\setminus  E_{R}} t^{(\theta_1-1)(\frac{s}{s-1}-\epsilon)} V^{-\bar{s}_2+\epsilon}d\mu dt \leq CR^{\bar{s}_1+C_0\epsilon} (\mathrm{log}\ R)^{s_2};
    \end{align}
    (2) for $z\in \{p,q\}$, there exist $\theta_1,\theta_2\geq1$, $C_0,C>0$ and $s_{4,z}\in [0,\bar{s}_{4,z})$ such that for every $R>0$ large enough and every $\epsilon>0$ small enough
    \begin{align}
        \int\int_{E_{2^{1/\theta_2}R}\setminus  E_{R}} r(x)^{(\theta_2-1)z(\frac{s}{s-z+1}-\epsilon)} V^{-\bar{s}_{4,z}+\epsilon}d\mu dt \leq CR^{\bar{s}_{3,z}+C_0\epsilon} (\mathrm{log}\ R)^{s_{4,z}};
    \end{align}
    \par
    HP2: Assume that: (1) there exist $\theta_1,\theta_2\geq1$ and $C_0,C>0$ such that for every $R>0$ large enough and every $\epsilon>0$ small enough
    \begin{align}
        \int\int_{E_{2^{1/\theta_2}R}\setminus  E_{R}} t^{(\theta_1-1)(\frac{s}{s-1}-\epsilon)} V^{-\bar{s}_2+\epsilon}d\mu dt \leq CR^{\bar{s}_1+C_0\epsilon} (\mathrm{log}\ R)^{\bar{s}_2}.
    \end{align}
    (2)  for $z\in \{p,q\}$, there exist $\theta_1,\theta_2\geq1$ and $C_0,C>0$ such that for every $R>0$ large enough and every $\epsilon>0$ small enough
    \begin{align}
        &\int\int_{E_{2^{1/\theta_2}R}\setminus  E_{R}} r(x)^{(\theta_2-1)z(\frac{s}{s-z+1}-\epsilon)} V^{-\bar{s}_{4,z}+\epsilon}d\mu dt \leq CR^{\bar{s}_{3,z}+C_0\epsilon} (\mathrm{log}\ R)^{\bar{s}_{4,z}},
        \\
        &\int\int_{E_{2^{1/\theta_2}R}\setminus  E_{R}} r(x)^{(\theta_2-1)z(\frac{s}{s-z+1}+\epsilon)} V^{-\bar{s}_{4,z}-\epsilon}d\mu dt \leq CR^{\bar{s}_{3,z}+C_0\epsilon} (\mathrm{log}\ R)^{\bar{s}_{4,z}}.
    \end{align}
\end{definition}
\begin{remark}
    By Fatou’s Lemma, the above conditions hold for $\epsilon=0$. Hence, we do not need the condition (1.9) in \cite{MMP2}. This has been shown in \cite{VGM}.
\end{remark}
\begin{remark}
    Note that $t\leq R^{\frac{\theta_2}{\theta_1}} $ and $r(x)\leq R$ on $E_R$. Hence the HP1 holds if $V$ satisfies the following integral estimates for $R>0$ large enough and $\epsilon>0$ small enough:
    \begin{align*}
        &\int\int_{E_{2^{1/\theta_2}R}\setminus  E_{R}}  V^{-\bar{s}_2+\epsilon}d\mu dt \leq CR^{\frac{\bar{s}_1}{\theta_1}+C_0\epsilon} (\mathrm{log}\ R)^{s_2},
        \\
        &\int\int_{E_{2^{1/\theta_2}R}\setminus  E_{R}}  V^{-\bar{s}_{4,z}+\epsilon}d\mu dt \leq CR^{\frac{\bar{s}_{3,z}}{\theta_2}+C_0\epsilon} (\mathrm{log}\ R)^{s_{4,z}},
    \end{align*}
    where $z\in \{p,q\}, s_2\in [0,\bar{s}_2) $ and $ s_{4,z}\in [0,\bar{s}_{4,z})$. Similar results hold for HP2.
\end{remark}
     \begin{remark}
         Analogous to quasilinear case \cite{Zhao}, the hypotheses highlight the dominant influence of the lower order term, that is $q$-Laplacian operator, which primarily affects the structure of the analysis. For example, if $V=1$, we can obtain the nonexistence result for the condition holds for $z=q$.
     \end{remark}
    Now we give the main theorem of this paper (the notion of weak solutions is in Section 2).
    \begin{theorem}
        Let $p\geq q>1,\ s>\mathrm{max}\{p-1,1\}$ and $ V\in L^1(M\times[0,\infty))$ with $V> 0$ a.e. on $M\times[0,\infty)$. Let $u\in W^{1,p}_{loc}(M\times[0,\infty))\cap L^s_{loc}(M\times[0,\infty))$ be a nonnegative weak solution of (1.1) and $u_0\in L^1(M),\ u_0\geq 0$ a.e on $M$, then $u\equiv0$ a.e on $M\times[0,\infty)$ provided that HP1 holds.
    \end{theorem}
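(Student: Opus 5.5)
We follow the test function method of Mastrolia--Monticelli--Punzo \cite{MMP2}, adapted to the $(p,q)$-Laplacian as in \cite{Zhao}. We argue by contradiction and assume $u\not\equiv 0$.

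\textbf{Test functions.} We use $\varphi=\phi^{k}(x,t)\,(u+\delta)^{-\alpha}$ with $\delta>0$ small and $\alpha\in(0,\min\{1,p-1,q-1\})$ small; later $\alpha$ is tied to $\epsilon$ in HP1. The cut-off is $\phi=\eta\big((t^{\theta_1}+r^{\theta_2}(x))/R^{\theta_2}\big)$, where $\eta\in C^\infty$ is nonincreasing, equals $1$ on $[0,1]$ and vanishes on $[2,\infty)$. Thus $\phi\equiv1$ on $E_R$ and $\operatorname{supp}\phi\subset E_{2^{1/\theta_2}R}$. On the annulus $E_{2^{1/\theta_2}R}\setminus E_R$ we have
\begin{align*}
|\partial_t\phi|\le C t^{\theta_1-1}R^{-\theta_2},\qquad |\nabla\phi|\le C r^{\theta_2-1}R^{-\theta_2}.
\end{align*}

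\textbf{Basic estimate.} Insert $\varphi$ into the weak formulation. The gradient terms produce the good terms $\alpha\int\int\phi^k(u+\delta)^{-\alpha-1}|\nabla u|^z$ for $z=p,q$. The cross terms $k\phi^{k-1}(u+\delta)^{-\alpha}|\nabla u|^{z-1}|\nabla\phi|$ are absorbed by Young's inequality, which leaves
\begin{align*}
C\int\int\phi^{k-z}(u+\delta)^{z-1-\alpha}|\nabla\phi|^z .
\end{align*}
The time term is handled by writing $\partial_t u\,(u+\delta)^{-\alpha}=\partial_t\big[(u+\delta)^{1-\alpha}/(1-\alpha)\big]$ and integrating by parts in $t$. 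The boundary term at $t=0$ involves $u_0$ and has a favorable sign (or is controlled using $u_0\in L^1$), and the remaining term is $\int\int(u+\delta)^{1-\alpha}\phi^{k-1}|\partial_t\phi|$. We then bound
\begin{align*}
\int\int (u+\delta)^{1-\alpha}\phi^{k-1}|\partial_t\phi| \quad\text{and}\quad \int\int \phi^{k-z}(u+\delta)^{z-1-\alpha}|\nabla\phi|^z
\end{align*}
by Young's inequality against the left-hand side $\frac12\int\int V u^s(u+\delta)^{-\alpha}\phi^k$. Using $s>\max\{1,p-1\}$ and letting $\delta\to0$, this gives
\begin{align*}
\int\int V u^{s-\alpha}\phi^k
\le C\int\int_{A_R} V^{-\frac{1-\alpha}{s-1}}\,|\partial_t\phi|^{\frac{s-\alpha}{s-1}}
+C\sum_{z}\int\int_{A_R} V^{-\frac{z-1-\alpha}{s-z+1}}\,|\nabla\phi|^{\frac{z(s-\alpha)}{s-z+1}},
\end{align*}
with $A_R=E_{2^{1/\theta_2}R}\setminus E_R$. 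Choosing $\alpha$ proportional to $\epsilon$ makes these exponents match those in HP1: $V^{-\bar s_2+\epsilon}$ and $t^{(\theta_1-1)(\frac{s}{s-1}-\epsilon)}$, and similarly for $z$. The powers of $R$ then cancel up to $R^{C\epsilon}$, so HP1 yields $\int\int_{E_R}Vu^{s-\alpha}\le CR^{C\epsilon}(\log R)^{s_2}+\dots$. The same argument gives a bound on the gradient energy $\int\int_{E_R}(u+\delta)^{-\alpha-1}|\nabla u|^z\phi^k$.

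\textbf{Main step.} Next we use the test function $\varphi=\phi^k$ with $\alpha=0$. Hölder's inequality with exponents split as $(u+\delta)^{-(1+\alpha)(z-1)/z}|\nabla u|^{z-1}\cdot(u+\delta)^{(1+\alpha)(z-1)/z}$ bounds $\int\int Vu^s\phi^k$ by products of annulus integrals of $Vu^{s-\alpha}$ (or $Vu^s$) and the HP1 quantities. This is the hardest part. We choose $\alpha=1/\log R$ (so $\epsilon\sim1/\log R$ and $R^{C\epsilon}$ stays bounded), as in \cite{MMP2,AS}, and track the logarithmic powers. The strict inequalities $s_2<\bar s_2$ and $s_{4,z}<\bar s_{4,z}$ make the exponent of $\log R$ in the final product negative, or at least force the annulus contributions to vanish. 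This gives $\int\int_S Vu^s<\infty$. Re-applying the estimate, the right-hand side is controlled by integrals over $A_R$ only, and these tend to $0$ as $R\to\infty$ because $Vu^s\in L^1(S)$. Hence $\int\int_S Vu^s=0$, so $u\equiv0$ since $V>0$ a.e.

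The two Laplacians are treated term by term, with $z=q$ the dominant case when $R$ is large. The delicate points are the logarithmic bookkeeping for the choice $\alpha=1/\log R$ and the justification of the time integration by parts at $t=0$, for which we use $u_0\in L^1(M)$ and $V\in L^1$.
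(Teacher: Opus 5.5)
There is a genuine gap, and it lies in the choice of test function. Your basic estimate is essentially the paper's Lemma 2.2, but with the compactly supported cutoff $\phi=\eta\big((t^{\theta_1}+r^{\theta_2})/R^{\theta_2}\big)$ you cannot drop the $\alpha$-dependence of its constants. Absorbing the cross term into $\alpha\int\int\phi^k(u+\delta)^{-\alpha-1}|\nabla u|^z$ costs $\alpha^{-(z-1)}$, and after the second Young inequality this becomes $\alpha^{-\frac{(z-1)(s-\alpha)}{s-z+1}}$ (the factor in (2.2)). Insert (1.8) with $\epsilon=\alpha/(s-z+1)$ into the annulus integral and take $\alpha=1/\log R$. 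The $z$-term is then of order $\alpha^{-\frac{(z-1)s}{s-z+1}}(\log R)^{s_{4,z}}=(\log R)^{\frac{(z-1)s}{s-z+1}+s_{4,z}}$, and the time term is of order $(\log R)^{s_2}$, so both diverge.

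Your main step does not repair this. The gradient energy you feed into H\"older is controlled only by $\alpha^{-1}$ times these divergent quantities. The complementary factor $\int\int V^{-\frac{(1+\alpha)(z-1)}{s-(1+\alpha)(z-1)}}|\nabla\phi|^{\frac{zs}{s-(1+\alpha)(z-1)}}$ involves $V^{-\bar{s}_{4,z}-\delta}$ with $\delta\approx\alpha>0$. Bounding it requires hypothesis (1.11) of HP2, which is not part of HP1. Even granting such a bound, the resulting product still carries a positive power of $\log R$, so the argument does not yield $\int\int_S Vu^s<\infty$. The strict inequalities $s_2<\bar{s}_2$ and $s_{4,z}<\bar{s}_{4,z}$ are never actually used.

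The missing idea is the slowly decaying Grigor'yan--Sun test function, which is nontrivial on all of $S\setminus E_R$. The paper takes $\varphi=1$ on $E_R$ and $\varphi=\big((r^{\theta_2}+t^{\theta_1})/R^{\theta_2}\big)^{-C_1a}$ outside, with $a=1/\log R$ and $C_1$ large. It truncates by $\eta_n$ and lets $n\to\infty$; the truncation terms carry a factor $n^{-ca}$ with $c>0$ and vanish.

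With this choice, $|\nabla\varphi|\le Ca(r^{\theta_2}+t^{\theta_1})^{-C_1a-1}r^{\theta_2-1}$ yields a gain $a^{\frac{z(s-a)}{s-z+1}}$. Summing HP1 over dyadic shells (Lemma 3.1) costs only $\int_{R/2^{1/\theta_2}}^\infty r^{-1-\kappa}(\log r)^{s_{4,z}}\,dr\le Ca^{-1-s_{4,z}}$, since the decay rate satisfies $\kappa\ge a/(s-z+1)$. Up to bounded factors, the net power is $a^{-\frac{(z-1)s}{s-z+1}+\frac{zs}{s-z+1}-1-s_{4,z}}=a^{\bar{s}_{4,z}-s_{4,z}}$, and likewise $a^{\bar{s}_2-s_2}$ for the time term. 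Hence Lemma 2.2 alone gives $\int\int_{E_R}Vu^{s-a}\,d\mu\,dt\le C\sum_z a^{\bar{s}_{4,z}-s_{4,z}}+Ca^{\bar{s}_2-s_2}\to 0$. This is exactly where the strict inequalities in HP1 enter, and Fatou's lemma then gives $\int\int_S Vu^s=0$.

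No second H\"older step is needed under HP1. The paper uses that step only for Theorem 1.6, where (1.11) is assumed.
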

    
\begin{theorem}
           Let $p\geq q>1,\ s>\mathrm{max}\{p-1,1\}$ and $ V\in L^1(M\times[0,\infty))$ with $V> 0$ a.e. on $M\times[0,\infty)$. Let $u\in W^{1,p}_{loc}(M\times[0,\infty))\cap L^s_{loc}(M\times[0,\infty))$ be a nonnegative weak solution of (1.1) and $u_0\in L^1(M),\ u_0\geq 0$ a.e on $M$, then $u\equiv0$ a.e on $M\times[0,\infty)$ provided that HP2 holds. 
       \end{theorem}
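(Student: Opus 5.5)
We prove Theorem 1.2 (the HP2 case) with the test function method of Mitidieri--Pohozaev, refined as in Grigor'yan--Sun and Mastrolia--Monticelli--Punzo to absorb the logarithmic factors. First, we fix a cutoff $\varphi_R(x,t)=\phi\big((t^{\theta_1}+r^{\theta_2}(x))/R^{\theta_2}\big)$, with $\phi\in C^\infty$, $\phi=1$ on $[0,1]$, $\phi=0$ on $[2,\infty)$ and $0\le\phi\le 1$. Thus $\varphi_R\equiv1$ on $E_R$ and $\mathrm{supp}\,\varphi_R\subset E_{2^{1/\theta_2}R}$. One has
\begin{align*}
|\partial_t\varphi_R|\le CR^{-\theta_2}t^{\theta_1-1},\qquad |\nabla\varphi_R|\le CR^{-\theta_2}r^{\theta_2-1}.
\end{align*}
These derivatives are supported in $E_{2^{1/\theta_2}R}\setminus E_R$, which explains the weights $t^{(\theta_1-1)(\cdot)}$ and $r^{(\theta_2-1)z(\cdot)}$ in HP1 and HP2.

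Next, we test the weak formulation of (1.1) with $\psi=u_\delta^{-\alpha}\varphi_R^k$, where $u_\delta=u+\delta$, $\alpha=\epsilon$ is small and $k$ is large; we let $\delta\to0$ at the end. This gives
\begin{align*}
\iint V u_\delta^{s-\alpha}\varphi^k
+\alpha\sum_{z\in\{p,q\}}\iint|\nabla u|^z u_\delta^{-\alpha-1}\varphi^k
\le \sum_z k\iint |\nabla u|^{z-1}u_\delta^{-\alpha}\varphi^{k-1}|\nabla\varphi|
+\frac{1}{1-\alpha}\iint u_\delta^{1-\alpha}|\partial_t(\varphi^k)|.
\end{align*}
The initial-data term has a favourable sign, and $u_0\in L^1$ keeps it finite.

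We then estimate the two right-hand terms.
\begin{itemize}
\item \emph{Gradient terms.} By Young's inequality, each is split into $\frac{\alpha}{2}\iint|\nabla u|^zu_\delta^{-\alpha-1}\varphi^k$, which is absorbed on the left, plus $C\alpha^{1-z}\iint u_\delta^{z-1-\alpha}\varphi^{k-z}|\nabla\varphi|^z$. A second Young inequality against $V u_\delta^{s-\alpha}\varphi^k$, with exponents $\frac{s-\alpha}{z-1-\alpha}$ and its conjugate, leaves $\iint V^{-\frac{z-1-\alpha}{s-z+1}}|\nabla\varphi|^{z\frac{s-\alpha}{s-z+1}}$.
\item \emph{Time term.} The same treatment with $u^{1-\alpha}$ leaves $\iint V^{-\frac{1-\alpha}{s-1}}|\partial_t\varphi|^{\frac{s-\alpha}{s-1}}$.
\end{itemize}
Inserting the bounds on the derivatives of $\varphi_R$, these become exactly the left-hand sides of (1.7)--(1.8), up to exponents perturbed linearly in $\alpha$. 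Multiplied by the powers of $R^{-\theta_2}$, they are $O\big(R^{C\alpha}(\log R)^{\bar s_2}\big)$ for the time term and $O\big(R^{C\alpha}(\log R)^{\bar s_{4,z}}\big)$ for the gradient terms, times $\alpha^{-\bar s_2}$, respectively $\alpha^{-\bar s_{4,z}}$ (the latter coming from the $\alpha^{1-z}$ factors). Choosing $\alpha=1/\log R$ makes $R^{C\alpha}$ bounded and cancels the $\log$ powers. We conclude that $\iint_{E_R}Vu^{s-\alpha}$ and $\alpha\iint_{E_R}|\nabla u|^zu^{-\alpha-1}$ are bounded uniformly in $R$.

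With these uniform bounds we run the standard second step.
\begin{itemize}
\item \emph{Test with $\varphi_R^k$ itself} ($\alpha=0$). Hölder's inequality on the annulus $E_{2^{1/\theta_2}R}\setminus E_R$ bounds $\iint Vu^s\varphi^k$ by products such as
\begin{align*}
\Big(\iint_{\mathrm{ann}}|\nabla u|^zu^{-\alpha-1}\varphi^k\Big)^{\frac{z-1}{z}}\Big(\iint_{\mathrm{ann}}u^{(z-1)(\alpha+1)}|\nabla\varphi|^z\varphi^{k-z}\Big)^{1/z}.
\end{align*}
The second factor is controlled via $Vu^{s}$ and HP2, using the ``$+\epsilon$'' condition (1.10), which is the reason HP2 contains it. The first factor is controlled by the first step.
\item \emph{Conclusion.} Since $\iint_S Vu^s<\infty$, the annular integrals tend to $0$ as $R\to\infty$. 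Hence $\iint_S Vu^s=0$, and since $V>0$ a.e., $u\equiv0$.
\end{itemize}
Under HP1, where $s_2<\bar s_2$ and $s_{4,z}<\bar s_{4,z}$, one instead takes $\alpha$ fixed small and the right side tends to $0$ directly. This gives Theorem 1.1, and the same scheme organizes Theorem 1.2.

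The main obstacle is the HP2 bookkeeping: matching the exponents so that every $\alpha$-dependence becomes exactly $\alpha^{-\bar s_2}$ or $\alpha^{-\bar s_{4,z}}$. The cases $z=p$ and $z=q$ must be handled simultaneously with a single $\alpha=1/\log R$. Also, $u^{1-\alpha}$ versus $u^{z-1-\alpha}$ need different Young splittings, and when $z-1<\alpha$ (possible for $q$ near $1$) one must take $\alpha<q-1$. Finally, one must justify the time-derivative term through a Steklov average in the weak formulation.
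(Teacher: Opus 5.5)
Your architecture matches the paper's Lemmas 2.2 and 2.3. You first test with $u^{-\alpha}\varphi^k$, then test with $\varphi^k$ and apply H\"older against $Vu^s$, with $\alpha=1/\log R$ and the ``$+\epsilon$'' condition (1.11) for the second factor. However, the step where the logarithms are supposed to cancel is false for your single-scale cutoff $\varphi_R=\phi\big((t^{\theta_1}+r^{\theta_2})/R^{\theta_2}\big)$.

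There are two problems. First, the second Young inequality raises $\alpha^{1-z}$ to the power $\frac{s-\alpha}{s-z+1}$, so the constant is about $\alpha^{-s\bar s_{4,z}}$, not $\alpha^{-\bar s_{4,z}}$. More importantly, any \emph{negative} power of $\alpha=1/\log R$ multiplies the factor $(\log R)^{\bar s_{4,z}}$ coming from HP2 instead of cancelling it; cancellation would need a positive power of $\alpha$.

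Concretely, $|\nabla\varphi_R|\le CR^{-\theta_2}r^{\theta_2-1}$ is supported in $E_{2^{1/\theta_2}R}\setminus E_R$, so (1.10) with $\epsilon=\alpha/(s-z+1)$ gives at best
\[
\alpha^{-\frac{(z-1)(s-\alpha)}{s-z+1}}\iint V^{-\frac{z-1-\alpha}{s-z+1}}|\nabla\varphi_R|^{z\frac{s-\alpha}{s-z+1}}\,d\mu\,dt\le C(\log R)^{(s+1)\bar s_{4,z}}.
\]
The time term carries no power of $\alpha$ at all, and (1.9) bounds it only by $C(\log R)^{\bar s_2}$. Neither bound is uniform in $R$, so Step 1 does not deliver the bounds you claim.

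Step 2 fails for the same reason. The factor $(\alpha^{-1}Q)^{\frac{z-1}{z}}$ must be offset by a factor $\alpha^{\frac{z-1}{z}}$ from the second H\"older factor. With $\varphi_R$, however, (1.11) only yields $\iint V^{-\frac{(\alpha+1)(z-1)}{s-(\alpha+1)(z-1)}}|\nabla\varphi_R|^{\frac{zs}{s-(\alpha+1)(z-1)}}\le C(\log R)^{\bar s_{4,z}}$. You are left with a positive power of $\log R$ times a power of the tail $\iint_{E_{2^{1/\theta_2}R}\setminus E_R}Vu^s$, whose decay rate is unknown. (Fixing $\alpha$ under HP1 fails as well: then $\epsilon$ is fixed and $R^{C_0\epsilon}$ grows polynomially.)

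The missing idea is the Grigor'yan--Sun type test function used in the paper. Take $\varphi\equiv1$ on $E_R$ and $\varphi=\big((r^{\theta_2}+t^{\theta_1})/R^{\theta_2}\big)^{-C_1a}$ on $S\setminus E_R$, with $a=1/\log R$ and $C_1$ large. Multiply it by an auxiliary cutoff $\eta_n$ at scale $nR$, removed by letting $n\to\infty$. Its derivatives carry an explicit factor $a$, e.g.\ $|\nabla\varphi|\le Ca(r^{\theta_2}+t^{\theta_1})^{-C_1a-1}r^{\theta_2-1}$. Summing HP2 dyadically over the whole exterior (Lemma 4.1) turns the weighted integrals into $\int_{R/2^{1/\theta_2}}^\infty r^{-1-ca}(\log r)^{\bar s_{4,z}}\,dr\le Ca^{-1-\bar s_{4,z}}$ (respectively $Ca^{-1-\bar s_2}$). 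Since $\frac{s}{s-z+1}=1+\bar s_{4,z}$ and $\frac{s}{s-1}=1+\bar s_2$, the exponents then balance exactly:
\[
a^{-\frac{(z-1)s}{s-z+1}}\cdot a^{\frac{zs}{s-z+1}}\cdot a^{-1-\bar s_{4,z}}=a^{0},\qquad a^{\frac{s}{s-1}}\cdot a^{-1-\bar s_2}=a^{0}.
\]
This gives $Q\le C$. In the second step, (1.11) gives $J_{z,2}\le Ca^{\frac{(z-1)s}{s-(a+1)(z-1)}}$, whose $\frac{s-(a+1)(z-1)}{sz}$-th power is exactly $a^{\frac{z-1}{z}}$, cancelling $(a^{-1}Q)^{\frac{z-1}{z}}$. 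Only then do $\iint_S Vu^s<\infty$ and the vanishing of the tail terms in (4.8) follow.
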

    Obviously, Theorem 1.5 and Theorem 1.6 generalize the result in \cite{MMP2} to $(p,q)$-Laplace and are the parabolic counterparts of the results in \cite{Zhao}. As in \cite{MMP2}, we have the following corollaries.
    \begin{corollary}
         Let $M=\mathbb{R}^n$ with the standard metric. Assume that $p\geq q>1,\ s>\mathrm{max}\{p-1,1\}$, $V=1$ and 
         \begin{align*}
             s\leq \frac{(n+1)}{n}q-1,
         \end{align*}
         then any nonnegative weak solution of (1.1) is equal to 0 a.e on $S$.         
         \end{corollary}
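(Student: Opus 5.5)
The plan is to deduce Corollary 1.7 from Theorem 1.6 (or Theorem 1.5) by exhibiting parameters $\theta_1,\theta_2\geq 1$ for which HP2 (resp. HP1) holds when $M=\mathbb{R}^n$ and $V\equiv 1$. On $\mathbb{R}^n$ the $x$-volume of the annulus $\{R< r\le 2^{1/\theta_2}R\}$ is comparable to $R^n$, and $t$ ranges up to $CR^{\theta_2/\theta_1}$. The natural parabolic scaling, chosen so that the $q$-Laplacian term and the time-derivative term balance, is $\theta_2=q$ and $\theta_1=1$. Then $t\lesssim R^q$ on $E_{2^{1/q}R}$, and
\begin{align*}
|E_{2^{1/q}R}\setminus E_R|\le CR^{n+q}.
\end{align*}
One caveat: $V\equiv1\notin L^1(S)$. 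I will simply note that the $L^1$ assumption is used only locally in the proof of the theorems (for the test function argument), as in \cite{MMP2}, and treat $V=1$ as admissible.

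\textbf{Checking (1.9) with $\theta_1=1$.} The weight $t^{(\theta_1-1)(\cdot)}$ is identically $1$, so the left side is bounded by $CR^{n+q}$. We need $n+q\le \bar s_1=\frac{sq}{s-1}$ (the $\epsilon$-terms are absorbed by choosing $C_0$ and using $R\ge1$). This is equivalent to $(n+q)(s-1)\le sq$, that is, $s\le \frac{n+q}{n}=1+\frac{q}{n}$.

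\textbf{Checking (1.10)--(1.11) for $z\in\{p,q\}$.} On the region $r\le 2^{1/q}R$, so the weight is $r^{(q-1)z(\frac{s}{s-z+1}\pm\epsilon)}\le CR^{(q-1)z\frac{s}{s-z+1}+C\epsilon}$. The left side is therefore at most $CR^{n+q+(q-1)\frac{zs}{s-z+1}+C_0\epsilon}$. Since $\bar s_{3,z}=\frac{szq}{s-z+1}$, the requirement becomes
\begin{align*}
n+q\le \frac{zs}{s-z+1}.
\end{align*}
For $z=q$ this reads $(n+q)(s-q+1)\le qs$, i.e. $ns\le (n+q)(q-1)$, i.e. $s\le \frac{(n+q)(q-1)}{n}$.

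This is where I expect the main obstacle: the resulting exponent does not obviously coincide with the stated $\frac{n+1}{n}q-1$, and the balance between the $t$ and $r$ scalings must be tuned. I will therefore keep $\theta_1,\theta_2$ free and redo the count. The volume is $R^{n+\theta_2/\theta_1}$, and the $t$-weight contributes $R^{\frac{\theta_2}{\theta_1}(\theta_1-1)\frac{s}{s-1}}$. I will then optimize over $\theta_1,\theta_2\ge1$ so that all three inequalities hold under the single hypothesis $s\le\frac{(n+1)q}{n}-1$.

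For the $z=p$ inequality, $\frac{zs}{s-z+1}$ is decreasing in $z$ for $s>z-1$, so the case $z=p$ is the stronger one. I would check whether the hypothesis $s>p-1$ together with the stated bound suffices. If it does not, I would appeal to Remark 1.4 (domination by the $q$-term), handling the $p$-term in the proof via the splitting $|\nabla u|^{p-2}\nabla u$ versus $|\nabla u|^{q-2}\nabla u$.

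Once admissible $\theta$'s are found, Theorem 1.6 gives $u\equiv0$ a.e. on $S$.
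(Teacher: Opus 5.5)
\textbf{Verdict: there is a genuine gap.} Your strategy is the right one, and it is what the paper intends. Verifying HP1/HP2 on $\mathbb{R}^n$ with $V\equiv1$ is the special case $\sigma_1=\sigma_{3,z}=n$, $\sigma_2=\sigma_{4,z}=1$ of the computation in the proof of Corollary 1.9. The problem is that you stop before the step that carries the content: you never exhibit admissible $\theta_1,\theta_2$. Your trial choice $\theta_2=q$, $\theta_1=1$ gives $s\le 1+\frac{q}{n}$ and $s\le\frac{(n+q)(q-1)}{n}$, which reproduce $s\le\frac{n+1}{n}q-1$ only when $q=2$.

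\textbf{The missing computation.} Put $\lambda=\theta_2/\theta_1$ and bound the left-hand sides over all of $E_{2^{1/\theta_2}R}$, where $r\le 2^{1/\theta_2}R$ and $t\le 2^{1/\theta_1}R^{\lambda}$. The $\epsilon$-corrections are absorbed into $C_0$ for $R\ge1$. Condition (1.7) (or (1.9)) then requires
\[
n+\lambda+\lambda(\theta_1-1)\tfrac{s}{s-1}\le \tfrac{s}{s-1}\theta_2 \iff \lambda\ge n(s-1),
\]
and conditions (1.8), (1.10), (1.11) require
\[
n+\lambda+(\theta_2-1)\tfrac{zs}{s-z+1}\le \tfrac{zs}{s-z+1}\theta_2 \iff \lambda\le \tfrac{zs}{s-z+1}-n .
\]
So everything reduces to the interval $[\,n(s-1),\ \frac{zs}{s-z+1}-n\,]$ being nonempty. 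That happens iff $n(s-z+1)\le z$, i.e. $s\le\frac{n+1}{n}z-1$. Any $\lambda>0$ can be realized with $\theta_1,\theta_2\ge1$: take $\theta_1=1,\ \theta_2=\lambda$, or $\theta_2=1,\ \theta_1=1/\lambda$. Since no logarithmic factor appears, HP1 holds with $s_2=s_{4,z}=0$. Hence Theorem 1.5 already applies, endpoint case included.

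\textbf{The monotonicity claim is backwards.} You say $\frac{zs}{s-z+1}$ is decreasing in $z$, but
\[
\frac{d}{dz}\,\frac{zs}{s-z+1}=\frac{s(s+1)}{(s-z+1)^2}>0 .
\]
So the function is increasing, and because $p\ge q$ the binding constraint is $z=q$, not $z=p$. The $z=p$ condition $s\le\frac{n+1}{n}p-1$ then follows automatically from the hypothesis.

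Under your version of the monotonicity, the stated hypothesis in terms of $q$ would not suffice. Your fallback would not rescue it: appealing to Remark 1.4 and re-splitting the $p$- and $q$-terms inside the proof is not an argument. The remark is only a heuristic, and Theorems 1.5 and 1.6 require the $z=p$ condition as a hypothesis. Once the monotonicity is corrected, no fallback is needed.

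Your caveat that $V\equiv1\notin L^1(S)$ is fair. It is a defect in the paper's standing hypotheses; presumably $L^1_{\mathrm{loc}}$ is intended.
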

         \begin{corollary}
             Let $M$ be a complete noncompact Riemannian manifold, $p\geq q>1,\ s>\mathrm{max}\{p-1,1\}$ and $  u_0\in L^1(M\times[0,\infty)), u_0\geq 0 $ a.e on $M$. Assume that $V\in L^1(M\times[0,\infty))$ satisfies that
             \begin{align*}
                 V(x,t)\geq f(t)h(x),\quad  for\ a.e.\  (x,t)\in M\times[0,\infty).
             \end{align*}
             Here $f:(0,\infty)\xrightarrow{}\mathbb{R}$ and $h:M\xrightarrow{} \mathbb{R}$ satisfy
             \begin{align}
                 &0<f(t)\leq C(1+t)^\alpha, \quad for\ a.e.\ t\in (0,\infty),
                 \\
                 &0<h(x)\leq C(1+r(x))^\beta, \quad for\ a.e.\ x\in M,
             \end{align}
             and for $z\in\{p,q\}$
             \begin{align}
                &\int_{B_R}h(x)^{-\frac{1}{s-1}}d\mu \leq CR^{\sigma_1}(\mathrm{log}R)^{\delta_1},\quad  \int_0^Tf(t)^{-\frac{1}{s-1}}dt\leq CT^{\sigma_2}(\mathrm{log}T)^{\delta_2},
                \\
                &\int_{B_R}h(x)^{-\frac{z-1}{s-z+1}}d\mu \leq CR^{\sigma_{3,z}}(\mathrm{log}R)^{\delta_{3,z}},\quad  \int_0^Tf(t)^{-\frac{s-1}{s-z+1}}dt\leq CT^{\sigma_{4,z}}(\mathrm{log}T)^{\delta_{4,z}} ,
             \end{align}
             where $T,R$ large enough and $\alpha,\beta,\sigma_1,\sigma_2,\sigma_{3,z},\sigma_{4,z},\delta_1,\delta_2,\delta_{3,z},\delta_{4,z} \geq 0$. Assume that the following conditions            
             ~\\
             (1) $\delta_1+\delta_2<\frac{1}{s-1}$, $ \delta_{3,z}+\delta_{4,z}<\frac{z-1}{s-z+1}$;
             \\
             (2) $0\leq \sigma_2\leq \frac{s}{s-1}$, $ 0\leq\sigma_{3,z}\leq \frac{sz}{s-z+1}$;
             \\
             (3) If $\sigma_2=\frac{s}{s-1}$, then $\sigma_1=0$; if $\sigma
             _{3,z}=\frac{sz}{s-z+1}$, then $\sigma_{4,z}=0$;
             \\
             (4) $\sigma_1\sigma_{4,z}\leq (\frac{s}{s-1}-\sigma_2)(\frac{sz}{s-z+1}-\sigma_{3,z})$
             \\
             hold for $z\in\{p,q\}$. Then (1.1) does not admit any nontrivial nonnegative weak solution.
         \end{corollary}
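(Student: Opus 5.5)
Corollary 1.8 follows from Theorem 1.5: with suitable $\theta_1,\theta_2\ge 1$, the product structure $V\ge f(t)h(x)$ together with (1.12)--(1.15) implies HP1. Since $V\ge f h$ and all exponents of $V$ in HP1 are negative, we have $V^{-a}\le f^{-a}h^{-a}$ for $a>0$. Each integral in (1.7)--(1.8) is therefore dominated by a product of a time integral and a space integral over the box $\{t\le (2R^{\theta_2})^{1/\theta_1}\}\times B_{2^{1/\theta_2}R}$, which contains $E_{2^{1/\theta_2}R}\setminus E_R$.

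\textbf{Bounding the integrals.} We perturb the exponents by $\epsilon$ and use (1.12)--(1.13). Since $f\le C(1+t)^\alpha$, we get $f^{-\bar s_2+\epsilon}=f^{-\bar s_2}f^{\epsilon}\le C f^{-\bar s_2}(1+t)^{\alpha\epsilon}$, and the factor $(1+t)^{\alpha\epsilon}$ costs at most $R^{C_0\epsilon}$. Likewise $h^{\epsilon}\le C R^{\beta\epsilon}$. This explains why $C_0\epsilon$ is allowed in HP1.

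For (1.7), take $T\sim R^{\theta_2/\theta_1}$. Then
\[
\int_0^T t^{(\theta_1-1)\frac{s}{s-1}} f^{-\frac1{s-1}}\,dt\le C\,T^{(\theta_1-1)\frac{s}{s-1}+\sigma_2}(\log T)^{\delta_2},
\qquad
\int_{B_{2R}}h^{-\frac1{s-1}}\,d\mu\le C R^{\sigma_1}(\log R)^{\delta_1}.
\]
The total power of $R$ is
\[
\frac{\theta_2}{\theta_1}\Big((\theta_1-1)\frac{s}{s-1}+\sigma_2\Big)+\sigma_1 .
\]
We need this to be at most $\bar s_1=\frac{s}{s-1}\theta_2$, which is equivalent to
\[
\theta_2\Big(\frac{s}{s-1}-\sigma_2\Big)\ge \theta_1\sigma_1. \tag{A}
\]

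For (1.8) with exponent $z$, the space factor is $r^{(\theta_2-1)\frac{sz}{s-z+1}}\le R^{(\theta_2-1)\frac{sz}{s-z+1}}$. Using (1.14)--(1.15), the powers give $(\theta_2-1)\frac{sz}{s-z+1}+\sigma_{3,z}+\frac{\theta_2}{\theta_1}\sigma_{4,z}$, which must be at most $\theta_2\frac{sz}{s-z+1}$. This is equivalent to
\[
\theta_1\Big(\frac{sz}{s-z+1}-\sigma_{3,z}\Big)\ge \theta_2\sigma_{4,z}. \tag{B}
\]
The logarithmic exponents are $\delta_1+\delta_2$ and $\delta_{3,z}+\delta_{4,z}$, since $\log T\sim\log R$. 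These sums lie below $\bar s_2$ and $\bar s_{4,z}$ by hypothesis (1), so we may take $s_2=\delta_1+\delta_2$ and $s_{4,z}=\delta_{3,z}+\delta_{4,z}$.

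\textbf{Main obstacle: choosing $\lambda=\theta_2/\theta_1$.} We need a single ratio $\lambda$ satisfying (A) and (B) for both $z=p$ and $z=q$, with $\theta_1,\theta_2\ge1$; scaling both up makes the latter harmless.
\begin{itemize}
\item (A) reads $\lambda\ge \sigma_1/(\frac{s}{s-1}-\sigma_2)$.
\item (B) reads $\lambda\le (\frac{sz}{s-z+1}-\sigma_{3,z})/\sigma_{4,z}$ for each $z$.
\item Condition (4) says exactly that the lower bound does not exceed each upper bound. Taking $\lambda$ equal to the lower bound, condition (4) for $z=p$ and for $z=q$ gives $\lambda$ below both upper bounds.
\item The degenerate cases are covered by (2)--(3): denominators vanish only when the corresponding numerators vanish. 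If $\sigma_2=\frac{s}{s-1}$ then $\sigma_1=0$ and (A) is trivial; if $\sigma_{4,z}=0$ then (B) is trivial.
\end{itemize}
One technicality: the $\epsilon$-perturbed exponents must still fall under (1.12)--(1.15). I would handle this by absorbing $f^{\pm\epsilon}$ and $h^{\pm\epsilon}$ via the upper bounds (1.10)--(1.11), which produces only $R^{C_0\epsilon}$ losses. The exponent mismatch $\frac{s-1}{s-z+1}$ versus $\frac{z-1}{s-z+1}$ in (1.15) should be treated the same way, exactly as in \cite{MMP2}. HP1 then holds, and Theorem 1.5 gives the conclusion.
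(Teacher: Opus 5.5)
Your route is the paper's. The paper proves Corollary 1.9 by checking HP2 and says Corollary 1.8 is analogous. Your conditions (A) and (B) are exactly its conditions $\frac{\theta_2}{\theta_1}(\sigma_2-\frac{s}{s-1})+\sigma_1\le 0$ and $\sigma_{3,z}-\frac{sz}{s-z+1}+\frac{\theta_2}{\theta_1}\sigma_{4,z}\le 0$, and $\theta_2/\theta_1$ is then chosen through condition (4) in the same way. The $\epsilon$-bookkeeping through $f^{\epsilon}\le C(1+t)^{\alpha\epsilon}$ and $h^{\epsilon}\le C(1+r)^{\beta\epsilon}$ is also the same. Only $f^{+\epsilon}$ and $h^{+\epsilon}$ occur in HP1. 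A factor $f^{-\epsilon}$ could not be absorbed by an upper bound, which is why Corollary 1.9 adds lower bounds for HP2.

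The step that fails is your final one. The time hypothesis has exponent $\frac{s-1}{s-z+1}$, while (1.8) needs $\bar s_{4,z}=\frac{z-1}{s-z+1}$. The difference is $\frac{s-z}{s-z+1}$, a fixed number, not $O(\epsilon)$, so it cannot be treated like the $\epsilon$-perturbations.
\begin{itemize}
\item If $s>z$, write $f^{-\bar s_{4,z}}=f^{-\frac{s-1}{s-z+1}}f^{\frac{s-z}{s-z+1}}$ and use $f\le C(1+t)^\alpha$ on $t\le CR^{\theta_2/\theta_1}$. This costs a factor $R^{\frac{\theta_2}{\theta_1}\alpha\frac{s-z}{s-z+1}}$, which is not of the form $R^{C_0\epsilon}$. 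It turns (B) into a condition not implied by (4) unless $\alpha=0$. H\"older in $t$ likewise changes the exponent $\sigma_{4,z}$.
\item If $s<z$, which is allowed (e.g.\ $p-1<s<p$ with $z=p$), you need a more negative power of $f$ than the one controlled. An upper bound on $f$ then gives nothing.
\end{itemize}
The paper attempts no such absorption. Its proof uses $\int_0^T f^{-\bar s_{4,z}}\,dt\le CT^{\sigma_{4,z}}(\log T)^{\delta_{4,z}}$ directly. In other words, it reads the exponent in the $f$-hypothesis as $\frac{z-1}{s-z+1}$, matching the $h$-hypothesis beside it. You should state that reading explicitly; then there is no mismatch and your argument is complete.

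One small repair: ``taking $\lambda$ equal to the lower bound'' gives $\lambda=0$ when $\sigma_1=0$, which is incompatible with $\theta_1,\theta_2\ge 1$. In that case take any sufficiently small $\lambda>0$. This is admissible because, by (2)--(3), each upper bound $(\frac{sz}{s-z+1}-\sigma_{3,z})/\sigma_{4,z}$ is either positive or vacuous ($\sigma_{4,z}=0$).
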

         \begin{corollary}
             Assume that $V$ satisfies the condition in Corollary 1.7. In addition, $f(t),h(x)$ have a lower bound respectively, i.e.,
             \begin{align}
                 &C^{-1}(1+t)^{-\alpha}\leq f(t)\leq C(1+t)^\alpha, \quad t\in (0,\infty),
                 \\
                 & C^{-1}(1+r(x))^{-\beta}\leq h(x)\leq C(1+r(x))^\beta, \quad x\in M.
             \end{align}
             Assume that the following conditions            
             ~\\
             (1) $\delta_1+\delta_2\leq\frac{1}{s-1}$, $ \delta_{3,z}+\delta_{4,z}\leq \frac{z-1}{s-z+1}$;
             \\
             (2) $0\leq \sigma_2\leq \frac{s}{s-1}$, $ 0\leq\sigma_{3,z}\leq \frac{sz}{s-z+1}$;
             \\
             (3) If $\sigma_2=\frac{s}{s-1}$, then $\sigma_1=0$; if $\sigma
             _{3,z}=\frac{sz}{s-z+1}$, then $\sigma_{4,z}=0$;
             \\
             (4) $\sigma_1\sigma_{4,z}\leq (\frac{s}{s-1}-\sigma_2)(\frac{sz}{s-z+1}-\sigma_{3,z})$
             \\
             hold for $z\in\{p,q\}$. Then (1.1) does not admit any nontrivial nonnegative weak solution.
         \end{corollary}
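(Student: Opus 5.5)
The plan is to derive the statement from Theorem 1.6 by checking that HP2 holds for a suitable choice of $\theta_1,\theta_2\geq 1$. The comparison $V\geq f(t)h(x)$ takes care of the main negative powers of $V$. The two-sided bounds on $f$ and $h$ are needed only to absorb the $\pm\epsilon$ perturbations in the exponents into a factor $R^{C_0\epsilon}$.

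\textbf{Step 1: bounding the powers of $V$.} For an exponent $a>0$ we have $V^{-a}\leq (fh)^{-a}$. In (1.9) and (1.10) the exponent of $V$ is $-\bar{s}_2+\epsilon$, respectively $-\bar{s}_{4,z}+\epsilon$, which is still negative for small $\epsilon$. So $V^{-\bar{s}_2+\epsilon}\leq (fh)^{-\bar{s}_2}(fh)^{\epsilon}$, and on $E_{2^{1/\theta_2}R}$ the upper bounds (1.12)--(1.13) give $(fh)^{\epsilon}\leq CR^{C_0\epsilon}$, using $t\leq CR^{\theta_2/\theta_1}$ and $r\leq CR$ there. For (1.11) we write $V^{-\bar{s}_{4,z}-\epsilon}\leq (fh)^{-\bar{s}_{4,z}}(fh)^{-\epsilon}$. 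Now the lower bounds (1.16)--(1.17) give $(fh)^{-\epsilon}\leq CR^{C_0\epsilon}$. This is exactly where the extra hypothesis of this corollary enters. The polynomial weights $t^{(\theta_1-1)(\frac{s}{s-1}\mp\epsilon)}$ and $r^{(\theta_2-1)z(\frac{s}{s-z+1}\mp\epsilon)}$ are handled in the same way. We bound them by their values at $\epsilon=0$ times $R^{C_0\epsilon}$, with $t\leq CR^{\theta_2/\theta_1}$ and $r\leq CR$.

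\textbf{Step 2: reducing to product integrals.} We use the inclusion $E_{2^{1/\theta_2}R}\subset B_{CR}\times[0,CR^{\theta_2/\theta_1}]$ and separate variables. Each integral is then bounded by the product of a spatial integral of $h^{-a}$ and a time integral of $f^{-a}$. By (1.14)--(1.15), for the first condition we obtain the bound
\[
CR^{C_0\epsilon}\,R^{\sigma_1+\frac{\theta_2}{\theta_1}\left(\sigma_2+(\theta_1-1)\frac{s}{s-1}\right)}(\log R)^{\delta_1+\delta_2}.
\]
For $z\in\{p,q\}$ we obtain
\[
CR^{C_0\epsilon}\,R^{\sigma_{3,z}+(\theta_2-1)\frac{sz}{s-z+1}+\frac{\theta_2}{\theta_1}\sigma_{4,z}}(\log R)^{\delta_{3,z}+\delta_{4,z}}.
\]
Here I read the time integral in (1.15) as the one matching the exponent $\bar{s}_{4,z}$ appearing in HP2. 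By hypothesis (1) the log powers are at most $\bar{s}_2$ and $\bar{s}_{4,z}$ respectively, since $\log(CR)\leq 2\log R$ for $R$ large.

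\textbf{Step 3: choosing $\theta_1,\theta_2$.} This is the main obstacle. Write $\lambda=\theta_2/\theta_1$. The first exponent is at most $\bar{s}_1=\theta_2\frac{s}{s-1}$ if and only if $\sigma_1\leq\lambda\left(\frac{s}{s-1}-\sigma_2\right)$. The $z$-exponent is at most $\bar{s}_{3,z}$ if and only if $\sigma_{3,z}+\lambda\sigma_{4,z}\leq\frac{sz}{s-z+1}$.

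So we need a single $\lambda>0$ with
\[
\frac{\sigma_1}{\frac{s}{s-1}-\sigma_2}\;\leq\;\lambda\;\leq\;\min_{z\in\{p,q\}}\frac{\frac{sz}{s-z+1}-\sigma_{3,z}}{\sigma_{4,z}}.
\]
Hypothesis (4), imposed for each $z$, says the single lower bound does not exceed each upper bound. Hypothesis (2) makes both numerators nonnegative. In the degenerate cases where a denominator vanishes, hypothesis (3) forces $\sigma_1=0$, respectively $\sigma_{4,z}=0$, and the corresponding constraint becomes vacuous. Once such a $\lambda$ is fixed, we take $\theta_1=\max\{1,1/\lambda\}$ and $\theta_2=\lambda\theta_1$, so that both are at least $1$.

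With this choice, (1.9)--(1.11) hold, so HP2 holds, and Theorem 1.6 gives $u\equiv 0$.
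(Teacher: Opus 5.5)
Your proposal is correct and follows the paper's own route. Like the paper, you verify HP2 through three moves. First, you use $V\ge fh$, with the upper bounds on $f,h$ absorbing the $+\epsilon$ perturbations and the new lower bounds absorbing the $-\epsilon$ one in (1.11). Second, you separate variables on $B_{CR}\times[0,CR^{\theta_2/\theta_1}]$. Third, you choose $\theta_2/\theta_1$ in the interval allowed by hypothesis (4).

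Your reading of the time integral in (1.15) with exponent $\bar{s}_{4,z}$ is also what the paper's proof implicitly uses. Your explicit handling of the degenerate cases via hypothesis (3) spells out a point the paper leaves unstated.
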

         
         Combining the result in \cite{VGM} and the proof of Theorem 1.5 and Theorem 1.6, we can actually establish nonexistence results for a wider class of inequalities of the type
         \begin{align*}
                  \partial_tu\geq  \mathrm{div}(|\nabla u|^{q-2}f(|\nabla u|)\nabla u)+V(x,t)u^s,\quad in\  M\times (0,\infty).
               \end{align*}
               Here $f$ satisfies the condition that there exists constants $K\geq 0,b_i\geq a_i>0$ and $v_i>0$ such that
               \begin{align*}
                   \sum\limits_{i=1}^{k} a_it^{v_i}\leq f(t) \leq \sum\limits_{i=1}^{k} b_it^{v_i}+K, \quad \forall t\geq 0.
               \end{align*}
               \par
             The rest of the paper is organized as follows. In Section 2, we introduce some preliminary results, which will be used in the proof. In Section 3 and Section 4, we give the proof of Theorem 1.5 and Theorem 1.6 respectively. Section 5 deals with the proof of corollaries.

             \section{Preliminary}
             We begin with the definition of the weak solution of (1.1).
             \begin{definition}
               Let $p\geq q>1,\ s>p-1$, $ V(x,t)\in L^1(S)$ with $V> 0$ a.e. on $M\times[0,\infty)$ and $u_0 \geq  0$ a.e. on $M$. We say that $u\in W^{1,p}_{loc}(M\times[0,\infty))\cap L^s_{loc}(M\times[0,\infty) )$ is a nonnegative weak solution of (1.1) if $u\geq 0$ a.e. on $M\times[0,\infty)$ and for every $0\leq \psi\in W^{1,p}(M\times[0,\infty))\cap L^{\infty}(M\times[0,\infty))$ with compact support, one has
               \begin{align}
                   &\int_0^\infty\int_M u\partial_t \psi d\mu dt+\int_Mu_0\psi(x,0)d\mu+\int_0^\infty\int_MVu^s\psi d\mu 
                   \nonumber\\
                   \leq& \int_0^\infty\int_M |\nabla u|^{p-2}\langle\nabla u,\nabla\psi\rangle d\mu dt+\int_0^\infty\int_M |\nabla u|^{q-2}\langle\nabla u,\nabla\psi\rangle d\mu dt.
               \end{align}
           \end{definition}
           Now we state some preliminary results which will be used in the proof.
           \begin{lemma}
               Let $0<a<\frac{1}{2}\mathrm{min}\{q-1,1\}$, $b>\mathrm{max}\{1,\frac{s}{s-1},\frac{ps}{s-p+1}\} $ and $s>\mathrm{max}\{p-1,1\}$. Assume that $u$ is a nonnegative weak solution of (1.1). Then for any $ \varphi\in \mathrm{Lip}(S)$ with compact support and $0\leq \varphi\leq 1$, we have
               \begin{align}
                   &\frac{a}{2}\int_0^\infty \int_M |\nabla u|^pu^{-a-1}\varphi^bd\mu dt+\frac{a}{2}\int_0^\infty \int_M |\nabla u|^qu^{-a-1}\varphi^bd\mu dt
                   \nonumber\\
                   &\qquad +\frac{1}{4}\int_0^\infty\int_Mu^{-a+s}\varphi^bVd\mu dt
                   \nonumber\\
                   \leq & Ca^{-\frac{(p-1)s}{s-p+1}}\int_0^\infty\int_M|\nabla \varphi|^{\frac{s-a}{s-p+1}p}V^{-\frac{p-a-1}{s-p+1}}d\mu dt
                   \nonumber\\
                   &+ Ca^{-\frac{(q-1)s}{s-q+1}}\int_0^\infty\int_M|\nabla \varphi|^{\frac{s-a}{s-q+1}q}V^{-\frac{q-a-1}{s-q+1}}d\mu dt
                   \nonumber\\
                   &+C\int_0^\infty \int_M |\partial_t\varphi|^{\frac{s-a}{s-1}}V^{-\frac{-a+1}{s-1}}d\mu dt.
               \end{align}
           \end{lemma}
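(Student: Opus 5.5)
The plan is to test the weak formulation (2.1) with $\psi=(u+\delta)^{-a}\varphi^b$, where $\delta>0$, and to let $\delta\to0$ at the very end by monotone convergence. The truncation $u_\delta=u+\delta$ is there so that $\psi$ is bounded and lies in $W^{1,p}$ with compact support. If needed, one first uses a Steklov average in time to justify differentiating $u$ in $t$; this is the standard device of \cite{MMP2}.

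For this $\psi$ we have
\begin{align*}
\nabla\psi=-a u_\delta^{-a-1}\varphi^b\nabla u+b u_\delta^{-a}\varphi^{b-1}\nabla\varphi .
\end{align*}
The time term $\int u\,\partial_t\psi$ is handled by writing $u\,\partial_t(u_\delta^{-a})$ as a time derivative of a function of $u$. Concretely, $\int u_\delta\,\partial_t(u_\delta^{-a}\varphi^b)=\frac{1}{1-a}\int u_\delta^{1-a}\partial_t\varphi^b$ plus boundary terms at $t=0$. Since $u_0\ge0$, the boundary contribution has a favourable sign, or it combines with $\int u_0\psi(x,0)$ and can be discarded. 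The net effect is that the left side of (2.1) produces $\int Vu^s u_\delta^{-a}\varphi^b$ minus $C\int u_\delta^{1-a}|\partial_t\varphi|\varphi^{b-1}$.

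The gradient terms on the right of (2.1) give, for $z\in\{p,q\}$, the good quantity $-a\int|\nabla u|^z u_\delta^{-a-1}\varphi^b$ plus the cross term $b\int|\nabla u|^{z-1}u_\delta^{-a}\varphi^{b-1}|\nabla\varphi|$. Moving the good terms to the left, we reach
\begin{align*}
a\sum_z\int|\nabla u|^zu_\delta^{-a-1}\varphi^b+\int Vu^su_\delta^{-a}\varphi^b\le b\sum_z\int|\nabla u|^{z-1}u_\delta^{-a}\varphi^{b-1}|\nabla\varphi|+C\int u_\delta^{1-a}\varphi^{b-1}|\partial_t\varphi| .
\end{align*}

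Next I absorb the cross terms by Young's inequality with exponents $\frac{z}{z-1}$ and $z$:
\begin{align*}
|\nabla u|^{z-1}u_\delta^{-a}\varphi^{b-1}|\nabla\varphi|\le \frac a2|\nabla u|^zu_\delta^{-a-1}\varphi^b+Ca^{-(z-1)}u_\delta^{z-1-a}\varphi^{b-z}|\nabla\varphi|^z .
\end{align*}
This leaves the term $a^{-(z-1)}\int u_\delta^{z-1-a}\varphi^{b-z}|\nabla\varphi|^z$. Since $z-1<s$, I apply Young again with exponents $\frac{s-a}{z-1-a}$ and $\frac{s-a}{s-z+1}$, weighting by $V$:
\begin{align*}
a^{-(z-1)}u^{z-1-a}\varphi^{b-z}|\nabla\varphi|^z\le \frac1{8}Vu^{s-a}\varphi^b+C a^{-\frac{(z-1)(s-a)}{s-z+1}}\varphi^{b-\frac{z(s-a)}{s-z+1}}|\nabla\varphi|^{\frac{z(s-a)}{s-z+1}}V^{-\frac{z-1-a}{s-z+1}} .
\end{align*}
Because $a$ is small, the power of $a$ is bounded by $Ca^{-\frac{(z-1)s}{s-z+1}}$. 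The time term is treated in the same way, using exponents $\frac{s-a}{1-a}$ and $\frac{s-a}{s-1}$, which produces $|\partial_t\varphi|^{\frac{s-a}{s-1}}V^{-\frac{1-a}{s-1}}$. The assumption $b>\max\{\frac{s}{s-1},\frac{ps}{s-p+1}\}$ ensures that every leftover power of $\varphi$ is nonnegative, so those powers are at most $1$. The conditions $a<\frac12\min\{q-1,1\}$ ensure $z-1-a>0$ and $1-a>0$.

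The main obstacle is the mismatch between $u_\delta$ and $u$, in particular $u^s u_\delta^{-a}$ versus $u^{s-a}$. To handle it, I perform the Young steps with $u_\delta$ throughout and use $u^s\ge$ a suitable comparison on the set $\{u\ge\delta\}$. Alternatively, I pass $\delta\to0$ by Fatou on the left and dominated convergence on the right. The latter works since $u\in L^s_{loc}$ and the exponents are less than $s$.

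A second delicate point is the rigorous treatment of the time derivative term. Here I would follow \cite{MMP2} verbatim. After collecting everything, the constant in front of the $V$-term on the left is at least $\frac14$, and the constants in front of the gradient terms are $\frac a2$. This yields (2.2).
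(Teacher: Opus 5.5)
Your proposal is correct and follows the paper's proof. The paper likewise tests with $u^{-a}\varphi^b$, regularized by $u+\eta$. It integrates the time term by parts so that the $u_0$ boundary contribution has a good sign, and absorbs the gradient cross terms by Young's inequality with weight $a/2$. It then absorbs the leftover $u^{z-1-a}$ and $u^{1-a}$ terms into $Vu^{s-a}\varphi^b$ by a second Young inequality weighted by $V$. Your second option for removing $\delta$ (monotone convergence on the left, dominated convergence on the right) works if it is applied before the $V$-weighted Young step, so that nothing has to be absorbed into $Vu^su_\delta^{-a}\varphi^b$. Done that way, it is more careful than the paper's one-line ``without loss of generality''.
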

           \begin{proof}
               Without loss of generality, we assume that $u$ is positive with 
               $u^{-1}\in L^{\infty}_{loc}(M\times [0,\infty))$. Otherwise, we define $u_\eta=u+\eta$ for any $\eta>0$ and then let $\eta\xrightarrow{} 0$.  Let $\psi=u^{-a}\varphi^b$, then 
               \begin{align}
                   \nabla\psi=-au^{-a-1}\varphi^b\nabla u+bu^{-a}\varphi^{b-1}\nabla  \varphi,\quad \partial_t\psi=-au^{-a-1}\varphi^b\partial_t u+bu^{-a}\varphi^{b-1}\partial_ t\varphi
               \end{align}
              a.e. on $S$. From (2.1), we have
               \begin{align}
                   &\int_0^\infty\int_M u^{s-a}\varphi^bVd\mu dt
                   \nonumber\\
                   \leq &-a\int_0^\infty\int_M |\nabla u|^pu^{-a-1}\varphi^bd\mu dt+b \int_0^\infty\int_M|\nabla u|^{p-2}u^{-a}\varphi^{b-1}\langle \nabla u,\nabla\varphi\rangle d\mu dt
                   \nonumber\\
                   &-a\int_0^\infty\int_M |\nabla u|^qu^{-a-1}\varphi^bd\mu dt+b \int_0^\infty\int_M|\nabla u|^{q-2}u^{-a}\varphi^{b-1}\langle \nabla u,\nabla\varphi\rangle d\mu dt
                   \nonumber\\
                   &+a\int_0^\infty\int_M u^{-a}\varphi^b\partial_t u d\mu dt-b\int_0^\infty\int_M u^{-a+1}\varphi^{b-1}\partial_t\varphi d\mu dt
                   \nonumber\\
                   &-\int_Mu_0^{-a+1}\varphi^b(x,0)d\mu.
               \end{align}
               Since
               \begin{align*}
                   &a\int_0^\infty\int_M u^{-a}\varphi^b\partial_t u d\mu dt=\frac{a}{1-a}\int_0^\infty\int_M \partial_t (u^{1-a})\varphi^b d\mu dt
                   \\
                   =&\frac{a}{1-a}\int_0^\infty\int_M \partial_t(u^{1-a}\varphi^b)-u^{1-a}\partial_t(\varphi^b)d\mu dt
                   \\
                   =&-\frac{a}{1-a}\int_M u_0^{{1-a}}\varphi^b(x,0)d\mu-\frac{ab}{1-a}\int_0^\infty\int_M u^{-a+1}\varphi^{b-1}\partial_t\varphi d\mu dt,
               \end{align*}
               we derive that
               \begin{align}
                   &a\int_0^\infty\int_M u^{-a}\varphi^b\partial_t u d\mu dt-b\int_0^\infty\int_M u^{-a+1}\varphi^{b-1}\varphi_t d\mu dt
                   -\int_Mu_0^{-a+1}\varphi^b(x,0)d\mu
                   \nonumber\\
                   =&-\frac{1}{1-a}\int_M u_0^{{1-a}}\varphi^b(x,0)d\mu-\frac{b}{1-a}\int_0^\infty\int_M u^{-a+1}\varphi^{b-1}\partial_t\varphi d\mu dt.
               \end{align}
               Substituting in (2.4), we have
               \begin{align}
                   &\int_0^\infty\int_M u^{s-a}\varphi^bVd\mu dt
                   \nonumber\\
                   \leq &-a\int_0^\infty\int_M |\nabla u|^pu^{-a-1}\varphi^bd\mu dt+b \int_0^\infty\int_M|\nabla u|^{p-2}u^{-a}\varphi^{b-1}\langle \nabla u,\nabla\varphi\rangle d\mu dt
                   \nonumber\\
                   &-a\int_0^\infty\int_M |\nabla u|^qu^{-a-1}\varphi^bd\mu dt+b \int_0^\infty\int_M|\nabla u|^{q-2}u^{-a}\varphi^{b-1}\langle \nabla u,\nabla\varphi\rangle d\mu dt
                   \nonumber\\
                   &-\frac{1}{1-a}\int_M u_0^{{1-a}}\varphi^b(x,0)d\mu-\frac{b}{1-a}\int_0^\infty\int_M u^{-a+1}\varphi^{b-1}\partial_t \varphi d\mu dt.
               \end{align}
               Using Young's inequality with the pair $(\frac{p-1}{p},\frac{1}{p})$, we obtain
               \begin{align*}
                   &b \int_0^\infty\int_M|\nabla u|^{p-2}u^{-a}\varphi^{b-1}\langle \nabla u,\nabla\varphi\rangle d\mu dt
                   \\
                   \leq& b \int_0^\infty\int_M|\nabla u|^{p-1}u^{-a}\varphi^{b-1}|\nabla\varphi| d\mu dt
                   \\
                   =& \int_0^\infty\int_M\left(\frac{a}{2}\right)^{\frac{p-1}{p}}\varphi^{\frac{p-1}{p}b}u^{-(a+1)\frac{p-1}{p}}|\nabla u|^{p-1} \cdot b \left(\frac{a}{2}\right)^{-\frac{p-1}{p}}\varphi^{\frac{b}{p}-1}u^{1-\frac{a+1}{p}}|\nabla\varphi|d\mu dt
                   \\
                   \leq & \frac{a}{2}\int_0^\infty\int_M |\nabla u|^pu^{-a-1}\varphi^bd\mu dt+b^p\left(\frac{a}{2}\right)^{-(p-1)}\int_0^\infty\int_Mu^{p-a-1}\varphi^{b-p}|\nabla\varphi|^pd\mu dt.
               \end{align*}
               Similarly, we have
               \begin{align*}
                   &b \int_0^\infty\int_M|\nabla u|^{q-2}u^{-a}\varphi^{b-1}\langle \nabla u,\nabla\varphi\rangle d\mu dt
                   \\
                   \leq &\frac{a}{2}\int_0^\infty\int_M |\nabla u|^qu^{-a-1}\varphi^bd\mu dt+b^q\left(\frac{a}{2}\right)^{-(q-1)}\int_0^\infty\int_Mu^{q-a-1}\varphi^{b-q}|\nabla\varphi|^qd\mu dt.
               \end{align*}
               Together with (2.6), we deduce 
               \begin{align}
                    &\frac{a}{2}\int_0^\infty \int_M |\nabla u|^pu^{-a-1}\varphi^bd\mu dt+\frac{a}{2}\int_0^\infty \int_M |\nabla u|^qu^{-a-1}\varphi^bd\mu dt
                   \nonumber\\
                   &\qquad +\int_0^\infty\int_Mu^{-a+s}\varphi^bVd\mu dt
                   \nonumber\\
                   \leq & b^p\left(\frac{a}{2}\right)^{-p+1}\int_0^\infty\int_Mu^{p-a-1}\varphi^{b-p}|\nabla\varphi|^pd\mu dt
                   \nonumber\\
                   &+b^q\left(\frac{a}{2}\right)^{-q+1}\int_0^\infty\int_Mu^{q-a-1}\varphi^{b-q}|\nabla\varphi|^qd\mu dt
                   \nonumber\\
                   &+\frac{b}{1-a}\int_0^\infty\int_M u^{-a+1}\varphi^{b-1}|\partial_t \varphi |d\mu dt.
               \end{align}
               By Young's inequality again, we have
               \begin{align}
                   &b^p\left(\frac{a}{2}\right)^{-p+1}\int_0^\infty\int_Mu^{p-a-1}\varphi^{b-p}|\nabla\varphi|^pd\mu dt
                   \nonumber\\
                   =& b^p\left(\frac{a}{2}\right)^{-p+1}\int_0^\infty\int_M u^{p-a-1}\varphi^{\frac{p-a-1}{s-a}b}V^{\frac{p-a-1}{s-a}}\cdot \varphi^{b\frac{s-p+1}{s-a}-p}V^{-\frac{p-a-1}{s-a}}|\nabla\varphi|^p d\mu dt
                   \nonumber\\
                   \leq & \frac{1}{4}\int_0^\infty\int_Mu^{-a+s}\varphi^bVd\mu dt
                   \nonumber\\
                   &+Ca^{-\frac{(s-a)(p-1)}{s-p+1}}\int_0^\infty\int_M |\nabla\varphi|^{\frac{s-a}{s-p+1}p}\varphi^{b-p\frac{s-a}{s-p+1}}V^{-\frac{p-a-1}{s-p+1}} d\mu dt.
               \end{align}
               By a similar argument, we have
               \begin{align}
                   &b^q\left(\frac{a}{2}\right)^{-q+1}\int_0^\infty\int_Mu^{q-a-1}\varphi^{b-q}|\nabla\varphi|^qd\mu dt
                   \nonumber\\
                   \leq &\frac{1}{4}\int_0^\infty\int_Mu^{-a+s}\varphi^bVd\mu dt
                   \nonumber\\
                   &+Ca^{-\frac{(s-a)(q-1)}{s-q+1}}\int_0^\infty\int_M |\nabla\varphi|^{\frac{s-a}{s-q+1}q}\varphi^{b-q\frac{s-a}{s-q+1}}V^{-\frac{q-a-1}{s-q+1}} d\mu dt
               \end{align}
               and
               \begin{align}
                   &\frac{b}{1-a}\int_0^\infty\int_M u^{-a+1}\varphi^{b-1}|\partial_t \varphi |d\mu dt
                   \nonumber\\
                   \leq & \frac{1}{4}\int_0^\infty\int_Mu^{-a+s}\varphi^bVd\mu dt+C\int_0^\infty\int_M V^{-\frac{1-a}{s-1}}\varphi^{b-\frac{s-a}{s-1}}|\partial_t \varphi|^{\frac{s-a}{s-1}}d\mu dt.
               \end{align}
               From (2.7)-(2.10), we have
               \begin{align*}
                   &\frac{a}{2}\int_0^\infty \int_M |\nabla u|^pu^{-a-1}\varphi^bd\mu dt+\frac{a}{2}\int_0^\infty \int_M |\nabla u|^qu^{-a-1}\varphi^bd\mu dt
                   \nonumber\\
                   &\qquad +\frac{1}{4}\int_0^\infty\int_Mu^{-a+s}\varphi^bVd\mu dt
                   \nonumber\\
                   \leq & Ca^{-\frac{(s-a)(p-1)}{s-p+1}}\int_0^\infty\int_M |\nabla\varphi|^{\frac{s-a}{s-p+1}p}\varphi^{b-p\frac{s-a}{s-p+1}}V^{-\frac{p-a-1}{s-p+1}} d\mu dt
                   \nonumber\\
                   &+Ca^{-\frac{(s-a)(q-1)}{s-q+1}}\int_0^\infty\int_M |\nabla\varphi|^{\frac{s-a}{s-q+1}q}\varphi^{b-q\frac{s-a}{s-q+1}}V^{-\frac{q-a-1}{s-q+1}} d\mu dt
                   \nonumber\\
                   &+C\int_0^\infty\int_M V^{-\frac{1-a}{s-1}}\varphi^{b-\frac{s-a}{s-1}}|\partial_t \varphi|^{\frac{s-a}{s-1}}d\mu .
               \end{align*}
               Then (2.2) follows from $b>\mathrm{max}\{1,\frac{s}{s-1},\frac{ps}{s-p+1}\},0\leq \varphi\leq 1 $ and $a^a\leq C$.
           \end{proof}
           \begin{lemma}
               Let $0<a<\frac{1}{2}\mathrm{min}\{q-1,1,s-1,\frac{s-p+1}{p-1}\}$, $b>\mathrm{max}\{1,\frac{s+1}{s-1},\frac{2ps}{s-p+1}\} $ and $s>\mathrm{max}\{p-1,1\}$. Assume that $u$ is a nonnegative weak solution of (1.1). Then for any $ \varphi\in \mathrm{Lip}(S)$ with compact support and $0\leq \varphi\leq 1$, we have
               \begin{align}
                   &\int_0^\infty\int_MVu^s\varphi^bd\mu dt
                   \nonumber\\
                   \leq& C(a^{-1}Q)^{\frac{p-1}{p}}\cdot \left(\int\int_{S\setminus K}Vu^s\varphi^bd\mu dt\right)^{\frac{(a+1)(p-1)}{sp}}
                   \nonumber\\
                   &\cdot\left(\int\int_{S\setminus K}V^{-\frac{(a+1)(p-1)}{s-(p-1)(a+1)}}|\nabla\varphi|^{\frac{ps}{s-(a+1)(p-1)}} d\mu dt\right)^{\frac{s-(a+1)(p-1)}{sp}}
                   \nonumber\\
                    &+C(a^{-1}Q)^{\frac{q-1}{q}}\cdot \left(\int \int_{S\setminus K}Vu^s\varphi^bd\mu dt \right)^{\frac{(a+1)(q-1)}{sq}}
                   \nonumber\\
                   &\cdot\left(\int\int_{S\setminus K}V^{-\frac{(a+1)(q-1)}{s-(q-1)(a+1)}}|\nabla\varphi|^{\frac{qs}{s-(a+1)(q-1)}} d\mu dt\right)^{\frac{s-(a+1)(q-1)}{sq}}
                   \nonumber\\
                   &+C\left(\int \int_{S\setminus K}Vu^s\varphi^bd\mu dt \right)^{\frac{1}{s}}\cdot \left(\int\int_{S\setminus K}V^{-\frac{1}{s-1}}|\partial_t\varphi|^{\frac{s}{s-1}} d\mu dt\right)^{\frac{s-1}{s}},
               \end{align}
               where $K=\{x\in S:\varphi(x)=1\}$ and 
               \begin{align*}
                   Q=&a^{-\frac{(p-1)s}{s-p+1}}\int_0^\infty\int_M|\nabla \varphi|^{\frac{s-a}{s-p+1}p}V^{-\frac{p-a-1}{s-p+1}}d\mu dt
                   \nonumber\\
                   &+ a^{-\frac{(q-1)s}{s-q+1}}\int_0^\infty\int_M|\nabla \varphi|^{\frac{s-a}{s-q+1}q}V^{-\frac{q-a-1}{s-q+1}}d\mu dt
                   \nonumber\\
                   &+\int_0^\infty \int_M |\partial_t\varphi|^{\frac{s-a}{s-1}}V^{-\frac{-a+1}{s-1}}d\mu dt.
               \end{align*}               
           \end{lemma}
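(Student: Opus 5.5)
We use the weak formulation (2.1) with $\psi=\varphi^b$ (no negative power of $u$), and then split each resulting term with Hölder so that one factor is controlled by Lemma 2.2 and the other lives only where $\nabla\varphi$ or $\partial_t\varphi$ is supported. Since $u_0\ge0$, dropping the initial-data term in (2.1) gives
\begin{align*}
\int_0^\infty\int_M Vu^s\varphi^b\,d\mu dt\le b\int\int|\nabla u|^{p-1}\varphi^{b-1}|\nabla\varphi|+b\int\int|\nabla u|^{q-1}\varphi^{b-1}|\nabla\varphi|+b\int\int u\varphi^{b-1}|\partial_t\varphi|.
\end{align*}
All three integrands vanish on the interior of $K$, where $\nabla\varphi=0$ and $\partial_t\varphi=0$. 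Hence every integral on the right may be taken over $S\setminus K$.

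\textbf{Time term.} We apply Hölder with exponents $(s,\frac{s}{s-1})$, writing $u\varphi^{b-1}|\partial_t\varphi|=(V^{1/s}u\varphi^{b/s})\cdot(V^{-1/s}\varphi^{b-1-b/s}|\partial_t\varphi|)$. Since $b>\frac{s}{s-1}$, the power of $\varphi$ in the second factor is nonnegative, so we may bound it by $1$. This yields the last term of (2.11) directly.

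\textbf{Gradient terms, $z\in\{p,q\}$.} We split the integrand as
\begin{align*}
|\nabla u|^{z-1}\varphi^{b-1}|\nabla\varphi|=\big(|\nabla u|^{z-1}u^{-\frac{(a+1)(z-1)}{z}}\varphi^{\frac{b(z-1)}{z}}\big)\cdot\big(u^{\frac{(a+1)(z-1)}{z}}\varphi^{\frac{b}{z}-1}|\nabla\varphi|\big).
\end{align*}
We then use Hölder with exponents $(\frac{z}{z-1},z)$. The first factor becomes $\big(\int\int_{S\setminus K}|\nabla u|^zu^{-a-1}\varphi^b\big)^{\frac{z-1}{z}}$, which by Lemma 2.2 is at most $(Ca^{-1}Q)^{\frac{z-1}{z}}$. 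Enlarging the domain from $S\setminus K$ to $S$ is harmless here because the integrand is nonnegative.

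The second factor is $\big(\int\int_{S\setminus K}u^{(a+1)(z-1)}\varphi^{b-z}|\nabla\varphi|^z\big)^{1/z}$. We apply Hölder once more, with exponents $\frac{s}{(a+1)(z-1)}$ and $\frac{s}{s-(a+1)(z-1)}$, inserting $V^{\pm (a+1)(z-1)/s}$. The hypothesis $a<\frac{s-p+1}{p-1}$ gives $(a+1)(p-1)<s$, and $z\le p$ then makes both exponents admissible. This produces
\begin{align*}
\Big(\int\int_{S\setminus K}Vu^s\varphi^b\Big)^{\frac{(a+1)(z-1)}{sz}}\Big(\int\int_{S\setminus K}V^{-\frac{(a+1)(z-1)}{s-(a+1)(z-1)}}|\nabla\varphi|^{\frac{zs}{s-(a+1)(z-1)}}\varphi^{\gamma}\Big)^{\frac{s-(a+1)(z-1)}{sz}}.
\end{align*}
The leftover exponent is $\gamma=\big(b-z-\frac{b(a+1)(z-1)}{s}\big)\frac{s}{s-(a+1)(z-1)}$.

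\textbf{Main obstacle.} The step needing care is checking that $\gamma\ge0$, so that we can drop $\varphi^\gamma\le1$. The condition is equivalent to $b\big(s-(a+1)(z-1)\big)\ge zs$. Using $a<\frac{s-p+1}{2(p-1)}$ we get $s-(a+1)(z-1)\ge\frac{s-p+1}{2}$, and the assumption $b>\frac{2ps}{s-p+1}$ then gives the condition. This is exactly why Lemma 2.3 imposes the stronger constraints on $a$ and $b$ than Lemma 2.2. Collecting the three terms and absorbing constants gives (2.11).
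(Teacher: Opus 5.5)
Your proposal is correct and is essentially the paper's proof: you test with $\psi=\varphi^b$, apply Hölder with $(s,\frac{s}{s-1})$ to the time term and with $(\frac{z}{z-1},z)$ plus the weight $u^{\mp(a+1)(z-1)/z}$ to the gradient terms, bound the weighted gradient integral by Lemma 2.2, and finish with a second Hölder inserting $V^{\pm(a+1)(z-1)/s}$; your explicit check that $b\bigl(s-(a+1)(z-1)\bigr)\ge zs$ is exactly what the paper compresses into ``by the choice of $b$''. Two minor points: vanishing on the interior of $K$ only lets you restrict to $S\setminus\mathrm{int}\,K$, but every point of $K$ is a maximum of $\varphi\le 1$, so $\nabla\varphi$ and $\partial_t\varphi$ vanish at every differentiability point in $K$, hence a.e. on $K$ by Rademacher; and the negative powers of $u$ in your splitting need either the paper's reduction to $u>0$ or the fact that $\nabla u=0$ a.e. on $\{u=0\}$.
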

           \begin{proof}
               Without loss of generality, as in Lemma 2.2, we assume that $u$ is positive with $u^{-1}\in L^{\infty}_{loc}(M\times[0,\infty))$. Let $\psi=\varphi^b$, from (2.1), we have
               \begin{align}
                   &\int_0^\infty\int_MVu^s\varphi^b d\mu dt
                   \nonumber\\
                   \leq &b\int_0^\infty\int_M|\nabla u|^{p-2}\varphi^{b-1}\langle \nabla u,\nabla \varphi\rangle d\mu dt+b\int_0^\infty\int_M|\nabla u|^{q-2}\varphi^{b-1}\langle \nabla u,\nabla \varphi\rangle d\mu dt
                   \nonumber\\
                   &+b\int_0^\infty\int_Mu \varphi^{b-1}|\partial_t\varphi|d\mu dt-\int_Mu_0\varphi^b(x,0)d\mu.
               \end{align}
               Applying H$\ddot{\mathrm{o}}$lder inequality, we obtain
               \begin{align}
                   &\int_0^\infty\int_Mu\varphi^b|\partial_t \varphi|d\mu dt = \int_0^\infty \int_M u\varphi^{\frac{b}{s}}V^{\frac{1}{s}}\cdot V^{-\frac{1}{s}}\varphi^{\frac{s-1}{s}b-1}|\partial_t\varphi|d\mu dt
                   \nonumber\\
                   \leq & \left(\int\int_{S\setminus K} Vu^s\varphi^b d\mu dt \right)^{\frac{1}{s}}\cdot \left(\int\int_{S\setminus K}V^{-\frac{1}{s-1}}\varphi^{b-\frac{s}{s-1}}|\partial_t\varphi|^{\frac{s}{s-1}}d\mu dt \right)^{\frac{s-1}{s}}.
               \end{align}
               Using H$\ddot{\mathrm{o}}$lder inequality again with the pair ($\frac{p-1}{p},\frac{1}{p}$), we find that
               \begin{align}
                   &\int_0^\infty |\nabla u|^{p-1}\varphi^{b-1}|\nabla \varphi|d\mu dt
                   \nonumber\\
                   \leq & \left(\int_0^\infty \int_M\varphi^b|\nabla u|^pu^{-a-1}d\mu dt \right)^{\frac{p-1}{p}}\cdot \left(\int_0^\infty\int_M\varphi^{b-p}u^{(a+1)(p-1)}|\nabla \varphi|^pd\mu dt \right)^{\frac{1}{p}}.
               \end{align}
               Similarly, we have 
               \begin{align}
                   &\int_0^\infty |\nabla u|^{q-1}\varphi^{b-1}|\nabla \varphi|d\mu dt
                   \nonumber\\
                   \leq & \left(\int_0^\infty \int_M\varphi^b|\nabla u|^qu^{-a-1}d\mu dt \right)^{\frac{q-1}{q}}\cdot \left(\int_0^\infty\int_M\varphi^{b-q}u^{(a+1)(q-1)}|\nabla \varphi|^qd\mu dt \right)^{\frac{1}{q}}.
               \end{align}
               From Lemma 2.2 and the definition of $Q$, we have
                   \begin{align}
                   &\int_0^\infty \int_M |\nabla u|^pu^{-a-1}\varphi^bd\mu dt+\int_0^\infty \int_M |\nabla u|^qu^{-a-1}\varphi^bd\mu dt
                   \leq  Ca^{-1}Q.
               \end{align}
               Substituting (2.13)-(2.16) into (2.12), we obtain
               \begin{align}
                   &\int_0^\infty\int_MVu^s\varphi^b d\mu dt
                   \nonumber\\
                   \leq &C(a^{-1}Q)^{\frac{p-1}{p}}\cdot \left(\int_0^\infty\int_M\varphi^{b-p}u^{(a+1)(p-1)}|\nabla \varphi|^pd\mu dt \right)^{\frac{1}{p}}
                   \nonumber\\
                   &+ C(a^{-1}Q)^{\frac{q-1}{q}}\cdot \left(\int_0^\infty\int_M\varphi^{b-q}u^{(a+1)(q-1)}|\nabla \varphi|^qd\mu dt \right)^{\frac{1}{q}}
                   \nonumber\\
                   &+C\left(\int\int_{S\setminus K} Vu^s\varphi^bd\mu dt\right)^{\frac{1}{s}}\cdot \left(\int\int_{S\setminus K}V^{-\frac{1}{s-1}}\varphi^{b-\frac{s}{s-1}}|\partial_t\varphi|^{\frac{s}{s-1}}d\mu dt\right)^{\frac{s-1}{s}}.
               \end{align}
               Next we deal with the integrals
               \begin{align*}
                   \int_0^\infty\int_M\varphi^{b-p}u^{(a+1)(p-1)}|\nabla \varphi|^pd\mu dt,\quad \int_0^\infty\int_M\varphi^{b-q}u^{(a+1)(q-1)}|\nabla \varphi|^qd\mu dt.
               \end{align*}
               Through an application of H$\ddot{\mathrm{o}}$lder inequality, we derive
               \begin{align}
                   &\int_0^\infty\int_M\varphi^{b-p}u^{(a+1)(p-1)}|\nabla \varphi|^pd\mu dt
                   \nonumber\\
                   =&\int_0^\infty\int_M u^{(p-1)(a+1)}\varphi^{\frac{(p-1)(a+1)}{s}b}V^{\frac{(p-1)(a+1)}{s}}
                   \nonumber\\
                   &\qquad \cdot V^{-\frac{(p-1)(a+1)}{s}}\varphi^{\frac{s-(p-1)(a+1)}{s}b-p}|\nabla\varphi|^p d\mu dt
                   \nonumber\\
                   \leq &\left(\int\int_{S\setminus K} V^{-\frac{(p-1)(a+1)}{s-(p-1)(a+1)}}\varphi^{b-\frac{ps}{s-(p-1)(a+1)}}|\nabla \varphi|^{\frac{ps}{s-(p-1)(a+1)}}d\mu dt \right)^{\frac{s-(p-1)(a+1)}{s}}
                   \nonumber\\
                   &\qquad \cdot \left(\int\int_{S\setminus K} Vu^s\varphi^bd\mu dt\right)^{\frac{(p-1)(a+1)}{s}}.
               \end{align}
               In a similar way, we have
               \begin{align}
                   &\int_0^\infty\int_M\varphi^{b-q}u^{(a+1)(q-1)}|\nabla \varphi|^qd\mu dt
                   \nonumber\\
                   \leq &\left(\int\int_{S\setminus K} V^{-\frac{(q-1)(a+1)}{s-(q-1)(a+1)}}\varphi^{b-\frac{qs}{s-(q-1)(a+1)}}|\nabla \varphi|^{\frac{qs}{s-(q-1)(a+1)}}d\mu dt \right)^{\frac{s-(q-1)(a+1)}{s}}
                   \nonumber\\
                   &\qquad \cdot \left(\int\int_{S\setminus K} Vu^s\varphi^bd\mu dt\right)^{\frac{(q-1)(a+1)}{s}}.
               \end{align}
               Thus, from (2.17)-(2.19), we obtain
               \begin{align}
                   &\int_0^\infty\int_MVu^s\varphi^b d\mu dt
                   \nonumber\\
                   \leq &C(a^{-1}Q)^{\frac{p-1}{p}}\cdot \left(\int\int_{S\setminus K} Vu^s\varphi^bd\mu dt\right)^{\frac{(p-1)(a+1)}{sp}}
                   \nonumber\\
                   &\cdot \left(\int\int_{S\setminus K} V^{-\frac{(p-1)(a+1)}{s-(p-1)(a+1)}}\varphi^{b-\frac{ps}{s-(p-1)(a+1)}}|\nabla \varphi|^{\frac{ps}{s-(p-1)(a+1)}}d\mu dt \right)^{\frac{s-(p-1)(a+1)}{sp}}
                   \nonumber\\
                   &+ C(a^{-1}Q)^{\frac{q-1}{q}}\cdot \left(\int\int_{S\setminus K} Vu^s\varphi^bd\mu dt\right)^{\frac{(q-1)(a+1)}{sq}}
                   \nonumber\\
                   &\cdot \left(\int\int_{S\setminus K} V^{-\frac{(q-1)(a+1)}{s-(q-1)(a+1)}}\varphi^{b-\frac{qs}{s-(q-1)(a+1)}}|\nabla \varphi|^{\frac{qs}{s-(q-1)(a+1)}}d\mu dt \right)^{\frac{s-(q-1)(a+1)}{sq}}
                   \nonumber\\
                   &+C\left(\int\int_{S\setminus K} Vu^s\varphi^bd\mu dt\right)^{\frac{1}{s}}\cdot \left(\int\int_{S\setminus K}V^{-\frac{1}{s-1}}\varphi^{b-\frac{s}{s-1}}|\partial_t\varphi|^{\frac{s}{s-1}}d\mu dt\right)^{\frac{s-1}{s}}.
               \end{align}
               By the choice of $b$ and $0\leq \varphi\leq 1$, we complete the proof.
           \end{proof}

           \section{Proof of Theorem 1.5}
           Before the proof, we give a Lemma.
           \begin{lemma}
                Let $f$ be a nonnegative decreasing function on $\mathbb{R_+} $. 
                \\
                (1)  If (1.7) holds, then
                \begin{align}
                  &\int \int_{S\setminus B_R} f([r(x)^{\theta_2}+t^{\theta_1}]^{\frac{1}{\theta_2}})t^{(\theta_1-1)(\frac{s}{s-1}-\epsilon)}V^{-\bar{s}_{2}+\epsilon}d\mu dt
                  \nonumber\\
                  \leq & C\int_{R/2^{1/\theta_2}}^\infty f(r)r^{\bar{s}_{1}+C_0\epsilon-1}(\mathrm{log}\ r)^{s_{2}}dr
               \end{align}
                for $R$ large enough .
                \\
                (2) If (1.8) holds, then 
               \begin{align}
                  &\int \int_{S\setminus B_R} f([r(x)^{\theta_2}+t^{\theta_1}]^{\frac{1}{\theta_2}})r(x)^{(\theta_2-1)z(\frac{s}{s-z+1}-\epsilon)}V^{-\bar{s}_{4,z}+\epsilon}d\mu dt
                  \nonumber\\
                  \leq & C\int_{R/2^{1/\theta_2}}^\infty f(r)r^{\bar{s}_{3,z}+C_0\epsilon-1}(\mathrm{log}\ r)^{s_{4,z}}dr
               \end{align}
                for $z\in \{p,q\}$ and $R$ large enough .
                
           \end{lemma}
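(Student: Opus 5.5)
The plan is a dyadic decomposition of the complement of $E_{R}$ into the annular regions on which (1.7) and (1.8) control the weighted volume. Here $S\setminus B_R$ is read as $S\setminus E_R$, since $E_R$ is the natural ball in the parabolic gauge $\rho(x,t)=[r(x)^{\theta_2}+t^{\theta_1}]^{1/\theta_2}$, and $E_R=\{\rho\le R\}$.

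Set $R_k=2^{k/\theta_2}R$ for $k\ge 0$. Then $E_{R_{k+1}}=E_{2^{1/\theta_2}R_k}$, and
\begin{align*}
S\setminus E_R=\bigcup_{k\ge 0}\big(E_{R_{k+1}}\setminus E_{R_k}\big).
\end{align*}
On the $k$-th piece we have $R_k<\rho\le R_{k+1}$. Since $f$ is decreasing, $f(\rho)\le f(R_k)$ there. Applying (1.7) with $R$ replaced by $R_k$, which is allowed because $R_k\ge R$ is large, the contribution of the $k$-th piece to the left side of (3.1) is at most
\begin{align*}
C f(R_k)R_k^{\bar{s}_1+C_0\epsilon}(\log R_k)^{s_2}.
\end{align*}
For case (2) the argument is the same, using (1.8) with the weight $r(x)^{(\theta_2-1)z(\frac{s}{s-z+1}-\epsilon)}V^{-\bar s_{4,z}+\epsilon}$.

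Next I convert each term of the sum into an integral over the previous dyadic interval $[R_{k-1},R_k]$; this is why the lower limit is $R/2^{1/\theta_2}=R_{-1}$. For $r\in[R_{k-1},R_k]$ we have $f(r)\ge f(R_k)$, again by monotonicity. We also have $r^{\bar s_1+C_0\epsilon-1}\ge c\,R_k^{\bar s_1+C_0\epsilon-1}$ with $c$ depending only on $\theta_2$ and an upper bound for the exponent; this holds whatever the sign of the exponent, because $r/R_k\in[2^{-1/\theta_2},1]$. Finally $(\log r)^{s_2}\ge c(\log R_k)^{s_2}$ for $R$ large, since $\log R_{k-1}\ge \tfrac12\log R_k$ once $R\ge 2^{1/\theta_2}$ or so. The interval has length $R_k-R_{k-1}=(1-2^{-1/\theta_2})R_k$. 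Combining these,
\begin{align*}
f(R_k)R_k^{\bar s_1+C_0\epsilon}(\log R_k)^{s_2}\le C\int_{R_{k-1}}^{R_k} f(r)r^{\bar s_1+C_0\epsilon-1}(\log r)^{s_2}\,dr.
\end{align*}
Summing over $k\ge0$ gives (3.1); the intervals $[R_{k-1},R_k]$ tile $[R/2^{1/\theta_2},\infty)$ without overlap. The same summation gives (3.2).

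The only real care point is the uniformity of the constants. They must not depend on $k$, and they must be uniform in $\epsilon$ small so that they can later be absorbed. This holds because every comparison above is between quantities at scales that differ by a fixed factor $2^{1/\theta_2}$, and $\epsilon$ ranges over a bounded set. I also need the hypotheses (1.7) and (1.8) to hold uniformly for all $R_k\ge R$, which is how "for every $R$ large enough" is understood. Measurability and monotone convergence justify exchanging the sum with the integral, since all integrands are nonnegative.
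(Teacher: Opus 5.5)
Your proposal is correct and follows the paper's proof essentially step for step. The paper also decomposes $S\setminus E_R$ into the annuli $E_{2^{(i+1)/\theta_2}R}\setminus E_{2^{i/\theta_2}R}$, bounds $f$ by its value at the inner radius, applies (1.7) or (1.8) on each annulus, and then compares each term with the integral over the preceding dyadic interval, which is where the lower limit $R/2^{1/\theta_2}$ comes from. Your reading of $B_R$ as $E_R$ matches how the paper actually uses the notation, and your remarks on the sign of the exponent, the logarithm, and uniformity of the constants spell out comparisons the paper makes without comment.
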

           \begin{proof}
            Since the proof is similar, we only prove (3.2). By the monotonicity of $f$, we have
               \begin{align*}
                   &\int \int_{S\setminus B_R} f([r(x)^{\theta_2}+t^{\theta_1}]^{\frac{1}{\theta_2}})r(x)^{(\theta_2-1)z(\frac{s}{s-z+1}-\epsilon)}V^{-\bar{s}_{4,z}+\epsilon}d\mu dt
                   \\
                   =&\sum\limits_{i=0}^\infty\int\int_{B_{2^{(i+1)/\theta_2}R}\setminus B_{2^{i/\theta_2}R}}f([r(x)^{\theta_2}+t^{\theta_1}]^{\frac{1}{\theta_2}})r(x)^{(\theta_2-1)z(\frac{s}{s-z+1}-\epsilon)}V^{-\bar{s}_{4,z}+\epsilon}d\mu dt
                   \\
                   \leq & \sum\limits_{i=0}^\infty f(2^{i/\theta_2}R)\int\int_{B_{2^{(i+1)/\theta_2}R}\setminus B_{2^{i/\theta_2}R}}r(x)^{(\theta_2-1)z(\frac{s}{s-z+1}-\epsilon)}V^{-\bar{s}_{4,z}+\epsilon}d\mu dt
                   \\
                   \leq & C\sum\limits_{i=0}^\infty f(2^{i/\theta_2}R)\cdot \left( 2^{i/\theta_2}R\right)^{\bar{s}_{3,z}+C_0\epsilon}\left(\mathrm{log}(2^{i/\theta_2}R)\right)^{s_{4,z}}
                   \\
                   \leq &C\sum\limits_{i=0}^\infty f(2^{i/\theta_2}R)\left( 2^{(i-1)/\theta_2}R\right)^{\bar{s}_{3,z}+C_0\epsilon-1}\left(\mathrm{log}(2^{(i-1)/\theta_2}R)\right)^{s_{4,z}}(2^{i/\theta_2}-2^{(i-1)/\theta_2})R
                   \\
                   \leq &C\sum\limits_{i=-1}^\infty\int_{2^{i/\theta_2}R}^{2^{(i+1)/\theta_2}R}f(r)r^{\bar{s}_{3,z}+C_0\epsilon-1}(\mathrm{log}\ r)^{s_{4,z}}dr
                   \\
                   =&C\int_{R/2^{1/\theta_2}}^\infty f(r)r^{\bar{s}_{3,z}+C_0\epsilon-1}(\mathrm{log}\ r)^{s_{4,z}}dr.
               \end{align*}
           \end{proof}
           Now we begin the proof of Theorem 1.5.
           
           ~\\
         $\mathit{Proof\ of\ Theorem\ 1.5.}$ For $R>0$ large enough, let $a=\frac{1}{\mathrm{log R}}$. Define $\varphi_n=\varphi(x)\eta_n(x)$ for $n\in \mathbb{N}$, where
         \begin{align*}
    \varphi(x) &= 
    \begin{cases}
        1, & \text{if }(x,t)\in E_R, \\
        \left(\frac{r(x)^{\theta_2}+t^{\theta_1}}{R^{\theta_2}}\right)^{-C_1a},  & \text{if } (x,t)\in S\setminus E_R,
    \end{cases} \\
\end{align*}
    and 
         \begin{align*}
    \eta_n(x) &= 
    \begin{cases}
        1, & \text{if } (x,t)\in E_{nR}, \\
        2-\frac{r(x)^{\theta_2}+t^{\theta_1}}{(nR)^{\theta_2}},  & \text{if } (x,t)\in E_{2^{1/\theta_2}nR}\setminus E_{nR},
        \\
        0,  & \text{if }(x,t)\in S \setminus E_{2^{1/\theta_2}nR}.
    \end{cases} \\
\end{align*}
Here we assume that $C_1>\frac{C_0+\theta_2+2}{\theta_2}$ with $C_0$ as in HP1.
Note that $\varphi_n\xrightarrow{} \varphi$ as $n\xrightarrow{}\infty$ and
\begin{align}
    |\nabla \varphi_n|^\alpha\leq C(|\nabla \varphi|^\alpha+\varphi^\alpha|\nabla\eta_n|^\alpha) ,\quad |\partial_t\varphi_n|^\alpha\leq C(|\partial_t \varphi|^\alpha+\varphi^\alpha|\partial_t\eta_n|^\alpha)\quad 
\end{align}
a.e. on $S$ for $\alpha\geq 1$. Choosing $\varphi_n$ as the test function in (2.2) and letting $b>\mathrm{max}\{1,\frac{s+1}{s-1},\frac{2ps}{s-p+1}\}$, we have
\begin{align}
                   &\int_0^\infty\int_Mu^{-a+s}\varphi_n^b Vd\mu dt
                   \nonumber\\
                   \leq & Ca^{-\frac{(p-1)s}{s-p+1}}\int_0^\infty\int_M|\nabla \varphi_n|^{\frac{s-a}{s-p+1}p}V^{-\frac{p-a-1}{s-p+1}}d\mu dt
                   \nonumber\\
                   &+ Ca^{-\frac{(q-1)s}{s-q+1}}\int_0^\infty\int_M|\nabla \varphi_n|^{\frac{s-a}{s-q+1}q}V^{-\frac{q-a-1}{s-q+1}}d\mu dt
                   \nonumber\\
                   &+C\int_0^\infty \int_M |\partial_t\varphi_n|^{\frac{s-a}{s-1}}V^{-\frac{-a+1}{s-1}}d\mu dt
                   \nonumber\\
                   \leq &C(I_{p,1}+I_{p,2})+C(I_{q,1}+I_{q,2})+(I_{t,1}+I_{t,2}),
               \end{align}
               where
               \begin{align}
                   &I_{p,1}=a^{-\frac{(p-1)s}{s-p+1}}\int\int_{S\setminus E_R}|\nabla \varphi|^{\frac{s-a}{s-p+1}p}V^{-\frac{p-a-1}{s-p+1}}d\mu dt,
                   \\
                   &I_{p,2}=a^{-\frac{(p-1)s}{s-p+1}}\int\int_{E_{2^{1/\theta_2}nR}\setminus E_{nR}}\varphi^{\frac{s-a}{s-p+1}p}|\nabla \eta_n|^{\frac{s-a}{s-p+1}p}V^{-\frac{p-a-1}{s-p+1}}d\mu dt,
                   \\
                   &I_{q,1}=a^{-\frac{(q-1)s}{s-q+1}}\int\int_{S\setminus E_R}|\nabla \varphi|^{\frac{s-a}{s-q+1}q}V^{-\frac{q-a-1}{s-q+1}}d\mu dt,
                   \\
                   &I_{q,2}=a^{-\frac{(q-1)s}{s-q+1}}\int\int_{E_{2^{1/\theta_2}nR}\setminus E_{nR}}\varphi^{\frac{s-a}{s-q+1}q}|\nabla \eta_n|^{\frac{s-a}{s-q+1}q}V^{-\frac{q-a-1}{s-q+1}}d\mu dt,
                   \\
                   &I_{t,1}=\int\int_{S\setminus E_R}|\partial_t\varphi|^{\frac{s-a}{s-1}}V^{-\frac{-a+1}{s-1}}d\mu dt
                   \\
                   &I_{t,2}=\int\int_{E_{2^{1/\theta_2}nR}\setminus E_{nR}} \varphi^{\frac{s-a}{s-1}}|\partial_t\eta_n|^{\frac{s-a}{s-1}}V^{-\frac{-a+1}{s-1}}d\mu dt.
               \end{align}
               By Lemma 3.1 and 
               \begin{align*}
                   |\nabla \varphi|\leq &C_1a\theta_2\left(\frac{r(x)^{\theta_2}+t^{\theta_1}}{R^{\theta_2}}\right)^{-C_1a-1}\frac{r(x)^{\theta_2-1}}{R^{\theta_2}}
                   \\
                   \leq &Ca(r(x)^{\theta_2}+t^{\theta_1})^{-C_1a-1}r(x)^{\theta_2-1}
               \end{align*}
               a.e. on $S$, we have
               \begin{align*}
                   I_{p,1}\leq & C a^{-\frac{(p-1)s}{s-p+1}}\int\int_{S\setminus E_R}\left(a(r(x)^{\theta_2}+t^{\theta_1})^{-C_1a-1}r(x)^{\theta_2-1}\right)^{\frac{s-a}{s-p+1}p}V^{-\frac{p-a-1}{s-p+1}}d\mu dt
                   \\
                   \leq &C a^{\frac{p(s-a)-(p-1)s}{s-p+1}}\int\int_{S\setminus E_R}(r(x)^{\theta_2}+t^{\theta_1})^{-(C_1a+1)\frac{p(s-a)}{s-p+1}}
                   \\
                   & \qquad \cdot r(x)^{(\theta_2-1)\frac{p(s-a)}{s-p+1}}V^{-\frac{p-a-1}{s-p+1}}d\mu dt
                   \\
                   \leq &Ca^{\frac{p(s-a)-(p-1)s}{s-p+1}}\int_{R/2^{1/\theta_2}}^\infty r^{-(C_1a+1)\theta_2\frac{p(s-a)}{s-p+1}+\bar{s}_{3,p}+C_0\frac{a}{s-p+1}-1}(\mathrm{log}\ r)^{s_{4,p}}dr.
               \end{align*}
               Let $y=\alpha\mathrm{log}\ r$, where
               \begin{align*}
                   -\alpha=&-(C_1a+1)\theta_2\frac{p(s-a)}{s-p+1}+\bar{s}_{3,p}+C_0\frac{a}{s-p+1}.
               \end{align*}
               By the choice of $C_1, \bar{s}_{3,p}$ and $a$ small enough, we have
               \begin{align*}
                   -\alpha \leq  -C_1a\theta_2\frac{p(s-a)}{s-p+1}+\frac{p\theta_2a}{s-p+1}+C_0\frac{a}{s-p+1}\leq -\frac{a}{s-p+1}.
               \end{align*}
                Thus we get
               \begin{align}
                   I_{p,1}\leq & Ca^{\frac{p(s-a)-(p-1)s}{s-p+1}}\int_{0}^\infty e^{-y}\left(\frac{y}{\alpha}\right)^{s_{4,p}}\frac{dy}{\alpha}
                   \leq Ca^{\frac{p-1}{s-p+1}-s_{4,p}}.
               \end{align}
               Under the condition (1.8) with $\epsilon=\frac{a}{s-p+1}$ and $|\nabla \eta_n|\leq \frac{\theta_2r(x)^{\theta_2-1}}{(nR)^{\theta_2}}$,
                we have
               \begin{align*}
                   I_{p,2} \leq &C a^{-\frac{(p-1)s}{s-p+1}}\int\int_{E_{2^{1/\theta_2}nR}\setminus E_{nR}}n^{-C_1\theta_2a\frac{s-a}{s-p+1}p}\left(\frac{r(x)^{\theta_2-1}}{(nR)^{\theta_2}}\right)^{\frac{s-a}{s-p+1}p}V^{-\frac{p-a-1}{s-p+1}}d\mu dt
                   \\
                   \leq& Ca^{-\frac{(p-1)s}{s-p+1}}n^{-C_1\theta_2a\frac{s-a}{s-p+1}p}(nR)^{ -\theta_2\frac{s-a}{s-p+1}p}
                   \\
                   &\qquad \cdot\int\int_{E_{2^{1/\theta_2}nR}\setminus E_{nR}}r(x)^{ (\theta_2-1)\frac{s-a}{s-p+1}p}V^{-\frac{p-a-1}{s-p+1}}d\mu dt
                   \\
                   \leq& Ca^{-\frac{(p-1)s}{s-p+1}}n^{-C_1\theta_2a\frac{s-a}{s-p+1}p}(nR)^{ -\theta_2\frac{s-a}{s-p+1}p} (nR)^{\bar{s}_{3,p}+C_0\frac{a}{s-p+1}}(\mathrm{log}\ R)^{s_{4,p}}
                   \\
                   \leq & Ca^{-\frac{(p-1)s}{s-p+1}}n^{-C_1\theta_2a\frac{s-a}{s-p+1}p +\theta_2\frac{a}{s-p+1}p+C_0\frac{a}{s-p+1}}(\mathrm{log}\ R)^{s_{4,p}},
               \end{align*}
               where the last inequality follows from $\bar{s}_{3,p}=\frac{p\theta_2s}{s-p+1}$ and $R^a=e$. From the choice of $C_1$ and $a$ small enough, we get
               \begin{align*}
                   {-C_1a\frac{s-a}{s-p+1}p +\theta_2\frac{a}{s-p+1}p+C_0\frac{a}{s-p+1}}\leq -\frac{a}{s-p+1}.
               \end{align*}
               Therefore we have
               \begin{align*}
                   I_{p,2}\leq Ca^{-\frac{(p-1)s}{s-p+1}} n^{-\frac{a}{s-p+1} }(\mathrm{log}\ nR)^{s_{4,p}}.
               \end{align*}
               Similarly, for $z=q$, we obtain 
               \begin{align*}
                   I_{q,1}\leq Ca^{\frac{q-1}{s-q+1}-s_{4,q}},\quad I_{q,2}\leq C a^{-\frac{(q-1)s}{s-q+1}}n^{-\frac{a}{s-q+1} }(\mathrm{log}\ nR)^{s_{4,q}}.
               \end{align*}
               Next we estimate $I_{t,1}, I_{t,2}$. By (3.1) and 
               \begin{align*}
                   |\partial_t\varphi|\leq &C_1a\theta_2\left(\frac{r(x)^{\theta_2}+t^{\theta_1}}{R^{\theta_2}}\right)^{-C_1a-1}\frac{t^{\theta_1-1}}{R^{\theta_2}}
                   \\
                   \leq &Ca(r(x)^{\theta_2}+t^{\theta_1})^{-C_1a-1}t^{\theta_1-1},
               \end{align*}
               we find that
               \begin{align*}
                   I_{t,1}=&\int\int_{S\setminus E_R}|\partial_t\varphi|^{\frac{s-a}{s-1}}V^{-\frac{-a+1}{s-1}}d\mu dt                   
                   \\
                   \leq & C \int\int_{S\setminus E_R}\left(a(r(x)^{\theta_2}+t^{\theta_1})^{-C_1a-1}t^{\theta_1-1}\right)^{\frac{s-a}{s-1}}V^{-\frac{-a+1}{s-1}}d\mu d t
                   \\
                   \leq & Ca ^{\frac{s-a}{s-1}}\int\int_{S\setminus E_R}\left([r(x)^{\theta_2}+t^{\theta_1}]^{\frac{1}{\theta_2}}\right)^{-\theta_2(C_1a+1)\frac{s-a}{s-1}}t^{(\theta_1-1)\frac{s-a}{s-1}}V^{-\frac{-a+1}{s-1}}d\mu d t
                   \\
                   \leq & Ca ^{\frac{s-a}{s-1}}\int_{R/2^{1/\theta_2}}^\infty r^{-\theta_2(C_1a+1)\frac{s-a}{s-1}+\bar{s}_1+C_0\frac{a}{s-1}-1 }(\mathrm{log}\ r)^{s_2}dr.
               \end{align*}
               Let $y=\alpha \mathrm{log}\ r$, where
               \begin{align*}
                   -\alpha=&-\theta_2(C_1a+1)\frac{s-a}{s-1}+\bar{s}_1+C_0\frac{a}{s-1}.
               \end{align*}
               From the choice of $C_1,\bar{s}_1$ and $a$ small enough, we obtain
               \begin{align*}
                   -\alpha\leq -\frac{a}{s-1}.
               \end{align*}
               Hence 
               \begin{align*}
                   I_{t,1}\leq Ca ^{\frac{s-a}{s-1}}\int_{R/2^{1/\theta_2}}^\infty e^{-y}\left( \frac{y}{\alpha}\right)^{\bar{s}_2}\frac{dy}{\alpha}\leq Ca^{\frac{1}{s-1}-s_2}.
               \end{align*}
               Since
               \begin{align*}
                   |\partial_t \eta_n|\leq \frac{\theta_1t^{\theta_1-1}}{(nR)^{\theta_2}},
               \end{align*}
               using (1.7) with $\epsilon=\frac{a}{s-1}$, we deduce
               \begin{align*}
                   I_{t,2}\leq & C\int\int_{E_{2^{1/\theta_2}nR}\setminus E_{nR}} (n^{-\theta_2C_1a})^{\frac{s-a}{s-1}}\left(\frac{\theta_1t^{\theta_1-1}}{(nR)^{\theta_2}}\right)^{\frac{s-a}{s-1}}V^{-\frac{-a+1}{s-1}}d\mu dt
                   \\
                   \leq & C (nR)^{-\theta_2\frac{s-a}{s-1}}n^{-C_1\theta_2a\frac{s-a}{s-1}}(nR)^{\bar{s}_1+C_0\epsilon}(\mathrm{log}(nR))^{s_2}
                   \\
                   \leq & Cn^{-\frac{a}{s-1}(-\theta_2+C_1\theta_2s-C_1\theta_2a-C_0)}(\mathrm{log}(nR))^{s_2}
                   \\
                   \leq & Cn^{-\frac{a}{s-1}}(\mathrm{log}(nR))^{s_2}.
               \end{align*}
             Here in the last inequality, we use the fact $C_1>\frac{C_0+\theta_2+2}{\theta_2}$ and $a$ small enough. By the estimates of $I_{p,1},I_{p,2},I_{q,1},I_{q,2},I_{t,1},I_{t,2}$, we have
             \begin{align*}
               & \int_0^\infty\int_Mu^{-a+s}\varphi_n^b Vd\mu dt 
               \\
               \leq & C(a^{\frac{p-1}{s-p+1}-s_{4,p}}+ a^{-\frac{(p-1)s}{s-p+1}} n^{-\frac{a}{s-p+1} }(\mathrm{log}\ (nR))^{s_{4,p}})
               \\
               &+C(a^{\frac{q-1}{s-q+1}-s_{4,q}}+ a^{-\frac{(q-1)s}{s-q+1}} n^{-\frac{a}{s-q+1} }(\mathrm{log}\ (nR))^{s_{4,q}})
               \\
               &+C(a^{\frac{1}{s-1}-s_2}+ n^{-\frac{a}{s-1}}(\mathrm{log}(nR))^{s_2}).
             \end{align*}
             Letting $n\xrightarrow{}\infty$, we obtain
             \begin{align*}
                 \int\int_{E_R}u^{-a+s} Vd\mu dt\leq C(a^{\frac{p-1}{s-p+1}-s_{4,p}}+a^{\frac{q-1}{s-q+1}-s_{4,q}}+a^{\frac{1}{s-1}-s_2}).
             \end{align*}
             By the choice of $s_{4,p},s_{4,q}$ and $s_2$ and letting $R\xrightarrow{}\infty$, we conclude that
             \begin{align*}
                 \int_0^\infty\int_Mu^{s} Vd\mu dt=0,
             \end{align*}
             which implies that $u=0$ a.e on $S$.   {\qed}

             \section{Proof of Theorem 1.6}
              Analogous to Lemma 3.1, we give the following lemma. Since the proof is similar, we omit it.
             \begin{lemma}
                Let $f$ be a nonnegative decreasing function on $\mathbb{R_+} $. 
                \\
                (1)  If (1.9) holds, then
                \begin{align}
                  &\int \int_{S\setminus B_R} f([r(x)^{\theta_2}+t^{\theta_1}]^{\frac{1}{\theta_2}})t^{(\theta_1-1)(\frac{s}{s-1}-\epsilon)}V^{-\bar{s}_{2}+\epsilon}d\mu dt
                  \nonumber\\
                  \leq & C\int_{R/2^{1/\theta_2}}^\infty f(r)r^{\bar{s}_{1}+C_0\epsilon-1}(\mathrm{log}\ r)^{\bar{s}_{2}}dr
               \end{align}
                for $R$ large enough. In particular, from Remark 1.2, (4.1) holds for $\epsilon=0$.
                \\
                (2) If (1.10) holds, then 
               \begin{align}
                  &\int \int_{S\setminus B_R} f([r(x)^{\theta_2}+t^{\theta_1}]^{\frac{1}{\theta_2}})r(x)^{(\theta_2-1)z(\frac{s}{s-z+1}-\epsilon)}V^{-\bar{s}_{4,z}+\epsilon}d\mu dt
                  \nonumber\\
                  \leq & C\int_{R/2^{1/\theta_2}}^\infty f(r)r^{\bar{s}_{3,z}+C_0\epsilon-1}(\mathrm{log}\ r)^{\bar{s}_{4,z}}dr
               \end{align}
                for $z\in \{p,q\}$ and $R$ large enough.
                \\
                (3) If (1.11) holds, then
               \begin{align}
                  &\int \int_{S\setminus B_R} f([r(x)^{\theta_2}+t^{\theta_1}]^{\frac{1}{\theta_2}})r(x)^{(\theta_2-1)z(\frac{s}{s-z+1}+\epsilon)}V^{-\bar{s}_{4,z}-\epsilon}d\mu dt
                  \nonumber\\
                  \leq & C\int_{R/2^{1/\theta_2}}^\infty f(r)r^{\bar{s}_{3,z}+C_0\epsilon-1}(\mathrm{log}\ r)^{\bar{s}_{4,z}}dr
               \end{align}
                for $z\in \{p,q\}$ and $R$ large enough.
           \end{lemma}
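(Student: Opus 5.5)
The three estimates in Lemma 4.1 are proved exactly as (3.2) in Lemma 3.1: a dyadic decomposition of $S\setminus E_R$ combined with the monotonicity of $f$. Only the exponents of $r(x)$, $t$, $V$ and of the logarithm change. (The statement writes $S\setminus B_R$; as in Lemma 3.1, this is read as $S\setminus E_R$.) I will carry out (3) in detail, since it is the only one with the sign $+\epsilon$ in the exponents, and then indicate the changes for (1) and (2).

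Set
\begin{align*}
A_i=E_{2^{(i+1)/\theta_2}R}\setminus E_{2^{i/\theta_2}R},\quad i\geq 0,
\end{align*}
so that $S\setminus E_R=\bigcup_{i\geq0}A_i$ up to a null set. On $A_i$ we have $[r(x)^{\theta_2}+t^{\theta_1}]^{1/\theta_2}\geq 2^{i/\theta_2}R$. Since $f$ is decreasing, $f([r^{\theta_2}+t^{\theta_1}]^{1/\theta_2})\leq f(2^{i/\theta_2}R)$ on $A_i$. Applying (1.11) with $R$ replaced by $2^{i/\theta_2}R$, which is admissible because $R$ is large, gives
\begin{align*}
\int\int_{A_i} r(x)^{(\theta_2-1)z(\frac{s}{s-z+1}+\epsilon)}V^{-\bar{s}_{4,z}-\epsilon}d\mu dt\leq C(2^{i/\theta_2}R)^{\bar{s}_{3,z}+C_0\epsilon}\big(\mathrm{log}(2^{i/\theta_2}R)\big)^{\bar{s}_{4,z}}.
\end{align*}

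Next I convert each summand into an integral. Write $\rho_i=2^{i/\theta_2}R$, so that $\rho_i=2^{1/\theta_2}\rho_{i-1}$ and $\rho_i-\rho_{i-1}=(1-2^{-1/\theta_2})\rho_i$. Then
\begin{align*}
\rho_i^{\bar{s}_{3,z}+C_0\epsilon}(\mathrm{log}\,\rho_i)^{\bar{s}_{4,z}}\leq C\rho_{i-1}^{\bar{s}_{3,z}+C_0\epsilon-1}(\mathrm{log}\,\rho_{i-1})^{\bar{s}_{4,z}}(\rho_i-\rho_{i-1}).
\end{align*}
The constant $C$ depends only on $\theta_2$, $\bar{s}_{3,z}$, and an upper bound for $C_0\epsilon$. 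It is uniform in $i$ and $R$, because $\mathrm{log}\,\rho_i\leq 2\,\mathrm{log}\,\rho_{i-1}$ once $R$ is large.

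For $r\in[\rho_{i-1},\rho_i]$ we have $f(\rho_i)\leq f(r)$. The power $r^{\bar{s}_{3,z}+C_0\epsilon-1}$ is comparable to $\rho_{i-1}^{\bar{s}_{3,z}+C_0\epsilon-1}$ up to a constant depending on $\theta_2$, and $\mathrm{log}\,r\geq\mathrm{log}\,\rho_{i-1}$. Hence each term is bounded by
\begin{align*}
C\int_{\rho_{i-1}}^{\rho_i}f(r)r^{\bar{s}_{3,z}+C_0\epsilon-1}(\mathrm{log}\,r)^{\bar{s}_{4,z}}dr.
\end{align*}
The intervals $[\rho_{i-1},\rho_i]$, $i\geq 0$, tile $[R/2^{1/\theta_2},\infty)$. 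Summing over $i\geq0$ and using monotone convergence to interchange sum and integral yields (4.3).

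Parts (1) and (2) are identical: replace (1.11) by (1.9) or (1.10), the weight $r(x)^{(\theta_2-1)z(\cdots)}$ by the corresponding weight, and the exponent pair $(\bar{s}_{3,z},\bar{s}_{4,z})$ by $(\bar{s}_1,\bar{s}_2)$ in part (1). For the final claim in (1), Remark 1.2 shows via Fatou's lemma that (1.9) also holds with $\epsilon=0$. Running the same argument with $\epsilon=0$ then gives (4.1) with $\epsilon=0$.

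The only point needing care is that the constant in the summand-to-integral comparison must be uniform in $\epsilon$ and $R$. This holds because $\epsilon$ ranges over a bounded interval $(0,\epsilon_0]$ and $R$ is bounded below. There is no genuine obstacle, since the whole argument is a transcription of the proof of Lemma 3.1.
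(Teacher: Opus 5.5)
Your proposal is correct and follows the paper's approach. The paper omits the proof of Lemma 4.1 as similar to that of Lemma 3.1, and that proof is the same argument you give: decompose $S\setminus E_R$ into the shells $E_{2^{(i+1)/\theta_2}R}\setminus E_{2^{i/\theta_2}R}$, bound $f$ by $f(2^{i/\theta_2}R)$ on each shell, apply the volume hypothesis at radius $2^{i/\theta_2}R$, and compare each summand with the integral of $f(r)r^{\bar{s}_{3,z}+C_0\epsilon-1}(\log r)^{\bar{s}_{4,z}}$ over $[2^{(i-1)/\theta_2}R,\,2^{i/\theta_2}R]$ before summing. Your remarks on uniformity of the constants and on the $\epsilon=0$ case via Remark 1.2 match the paper.
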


           ~\\
         $\mathit{Proof\ of\ Theorem\ 1.6.}$ For $R>0$ large enough, let $a=\frac{1}{\mathrm{log R}}, b>\mathrm{max}\{1,\frac{s-1}{s+1},\frac{2sp}{s-p+1}\}$ and $C_1>\mathrm{max} \{{\frac{C_0+\theta_2+1}{\theta_2}},\frac{2(C_0+1)}{\theta_2(s-p+1)},\frac{2(C_0+1)}{\theta_2(s-1)s}\}$. From Lemma 2.3, we have
         \begin{align}
                   &\int_0^\infty\int_MVu^s\varphi_n^bd\mu dt
                   \nonumber\\
                   \leq& C(a^{-1}Q)^{\frac{p-1}{p}}\cdot \left(\int\int_{S\setminus K}Vu^s\varphi_n^bd\mu dt\right)^{\frac{(a+1)(p-1)}{sp}}
                   \nonumber\\
                   &\cdot\left(\int\int_{S\setminus K}V^{-\frac{(a+1)(p-1)}{s-(p-1)(a+1)}}|\nabla\varphi_n|^{\frac{ps}{s-(a+1)(p-1)}} d\mu dt\right)^{\frac{s-(a+1)(p-1)}{sp}}
                   \nonumber\\
                    &+C(a^{-1}Q)^{\frac{q-1}{q}}\cdot \left(\int \int_{S\setminus K}Vu^s\varphi_n^bd\mu dt \right)^{\frac{(a+1)(q-1)}{sq}}
                   \nonumber\\
                   &\cdot\left(\int\int_{S\setminus K}V^{-\frac{(a+1)(q-1)}{s-(q-1)(a+1)}}|\nabla\varphi_n|^{\frac{qs}{s-(a+1)(q-1)}} d\mu dt\right)^{\frac{s-(a+1)(q-1)}{sq}}
                   \nonumber\\
                   &+C\left(\int \int_{S\setminus K}Vu^s\varphi_n^bd\mu dt \right)^{\frac{1}{s}}\cdot \left(\int\int_{S\setminus K}V^{-\frac{1}{s-1}}|\partial_t\varphi_n|^{\frac{s}{s-1}} d\mu dt\right)^{\frac{s-1}{s}},
               \end{align}
               where $\varphi_n$ are the functions defined in Section 3 and 
               \begin{align*}
                   Q=&a^{-\frac{(p-1)s}{s-p+1}}\int_0^\infty\int_M|\nabla \varphi_n|^{\frac{s-a}{s-p+1}p}V^{-\frac{p-a-1}{s-p+1}}d\mu dt
                   \nonumber\\
                   &+ a^{-\frac{(q-1)s}{s-q+1}}\int_0^\infty\int_M|\nabla \varphi_n|^{\frac{s-a}{s-q+1}q}V^{-\frac{q-a-1}{s-q+1}}d\mu dt
                   \nonumber\\
                   &+\int_0^\infty \int_M |\partial_t\varphi_n|^{\frac{s-a}{s-1}}V^{-\frac{-a+1}{s-1}}d\mu dt
                   \\
                   =&J_{p,1}+J_{q,1}+J_{t,1}.
               \end{align*} 
               We want to estimate the term in (4.4) except $ \int \int_{S\setminus K}Vu^s\varphi_n^bd\mu dt$. Firstly, we estimate $ J_{p,1},J_{q,1},J_{t,1}$. As shown in Section 3, we have
               \begin{align*}
                   J_{p,1}\leq C(I_{p,1}+I_{p,2}),
               \end{align*}
               where $I_{p,1} $ and $ I_{p,2} $ are given by (3.5) and (3.6). Through the same argument, the only difference is that we use (4.2) rather than (3.2), we have 
               \begin{align}
                   J_{p,1}\leq C( a^{\frac{p-1}{s-p+1}-\bar{s}_{4,p}}+a^{-\frac{(p-1)s}{s-p+1}} n^{-\frac{a}{s-p+1} }(\mathrm{log}\ (nR))^{\bar{s}_{4,p}}).
               \end{align}
               Similarly, using Lemma 4.1, we have
               \begin{align}
                   &J_{q,1}\leq C(I_{q,1}+I_{q,2})\leq C( a^{\frac{q-1}{s-q+1}-\bar{s}_{4,q}}+a^{-\frac{(q-1)s}{s-q+1}} n^{-\frac{a}{s-q+1} }(\mathrm{log}\ (nR))^{\bar{s}_{4,q}}),
                   \\
                   & J_{t,1}\leq C(I_{t,1}+I_{t,2})\leq C( a^{\frac{1}{s-1}-\bar{s}_2}+n^{-\frac{a}{s-1}}(\mathrm{log}(nR))^{\bar{s}_2}).
               \end{align}
               From (4.5)-(4.7) and the definition of $\bar{s}_{4,p},\bar{s}_{4,q}, \bar{s}_2$, we have
               \begin{align*}
                   Q\leq& C(1+a^{-\frac{(p-1)s}{s-p+1}} n^{-\frac{a}{s-p+1} }(\mathrm{log}\ (nR))^{\bar{s}_{4,p}}+ a^{-\frac{(q-1)s}{s-q+1}} n^{-\frac{a}{s-q+1} }(\mathrm{log}\ (nR))^{\bar{s}_{4,q}}
                   \\
                   &\qquad+n^{-\frac{a}{s-1}}(\mathrm{log}(nR))^{\bar{s}_2}).
               \end{align*}
               Secondly, we give the estimate of the following integral
               \begin{align*}
                   &J_{p,2}=\int\int_{S\setminus K}V^{-\frac{(a+1)(p-1)}{s-(p-1)(a+1)}}|\nabla\varphi_n|^{\frac{ps}{s-(a+1)(p-1)}} d\mu dt
                   \\
                   \leq & C\int\int_{S\setminus E_R}V^{-\frac{(a+1)(p-1)}{s-(p-1)(a+1)}}|\nabla\varphi|^{\frac{ps}{s-(a+1)(p-1)}} d\mu dt
                   \\
                   &+C\int\int_{S\setminus E_R}V^{-\frac{(a+1)(p-1)}{s-(p-1)(a+1)}}|\nabla\eta_n|^{\frac{ps}{s-(a+1)(p-1)}} \varphi^{ \frac{ps}{s-(a+1)(p-1)}}d\mu dt
                   \\
                   =&C(J_{p,3}+J_{p,4}).
               \end{align*}
               From $ |\nabla \varphi|\leq Ca(r(x)^{\theta_2}+t^{\theta_1})^{-C_1a-1}r(x)^{\theta_2-1}$, we have
               \begin{align*}
                   J_{p,3}\leq &  C \int\int_{S\setminus E_R}V^{-\frac{(a+1)(p-1)}{s-(p-1)(a+1)}}(a(r(x)^{\theta_2}+t^{\theta_1})^{-C_1a-1}r(x)^{\theta_2-1})^{\frac{ps}{s-(a+1)(p-1)}} d\mu dt
               \end{align*}
               Let $\delta_p=\frac{(a+1)(p-1)}{s-(p-1)(a+1)}-\bar{s}_{4,p}$, then
               \begin{align*}
                   \frac{(p-1)sa}{(s-p+1)^2}\leq \delta_p=\frac{(p-1)sa}{(s-(a+1)(p-1))(s-p+1)}\leq \frac{2(p-1)sa}{(s-p+1)^2}
               \end{align*}
               for $a$ small enough and 
               \begin{align*}
                   \frac{ps}{s-(a+1)(p-1)}=\frac{\bar{s}_{3,p}}{\theta_2}+p\delta_p.
               \end{align*}
               Using (4.3) with $\epsilon=\delta_p$, we have
               \begin{align*}
                   J_{p,3}\leq &C a^{\frac{\bar{s}_{3,p}}{\theta_2}+p\delta_p} \int\int_{S\setminus E_R}(r(x)^{\theta_2}+t^{\theta_1})^{-(C_1a+1)(\frac{\bar{s}_{3,p}}{\theta_2}+p\delta_p)}
                   \\
                   &\qquad \cdot r(x)^{(\theta_2-1)p(\frac{s}{s-p+1}+\delta_p)}V^{-\frac{p-1}{s-p+1}-\delta_p} d\mu dt
                   \\
                   \leq & C a^{\frac{\bar{s}_{3,p}}{\theta_2}+p\delta_p}\int_{R/2^{1/\theta_2}}^\infty r^{-\theta_2(C_1a+1)(\frac{\bar{s}_{3,p}}{\theta_2}+p\delta_p)+\bar{s}_{3,p}+C_0\delta_p-1}(\mathrm{log}\ r)^{\bar{s}_{4,p}}dr.
               \end{align*}
               Define $y=\alpha \mathrm{log}\ r$, where
               \begin{align*}
                   -\alpha=&-\theta_2(C_1a+1)(\frac{\bar{s}_{3,p}}{\theta_2}+p\delta_p)+\bar{s}_{3,p}+C_0\delta_p\\
                   \leq& -\frac{psa}{(s-p+1)^2},
                   \end{align*}
               since $C_1>\mathrm{max} \{{\frac{C_0+\theta_2+1}{\theta_2}},\frac{2(C_0+1)}{\theta_2(s-p+1)},\frac{2(C_0+1)}{\theta_2(s-1)s}\}$ and $a$ small enough. Hence, we have
               \begin{align*}
                   J_{p,3}\leq &C a^{\frac{\bar{s}_{3,p}}{\theta_2}+p\delta_p}\int_0^\infty e^{-y}\left(\frac{y}{\alpha}\right)^{\bar{s}_{4,p}}\frac{dy}{\alpha}
                   \\
                   \leq & Ca^{\frac{ps}{s-(a+1)(p-1)}-\bar{s}_{4,p}-1 }
                   \\
                   =&C a^{\frac{(p-1)s}{s-(a+1)(p-1)}+\frac{(p-1)sa}{(s-p+1)(s-(p-1)(a+1))} }
                   \\
                   \leq  &C a^{\frac{(p-1)s}{s-(a+1)(p-1)}}
               \end{align*}
               for $a$ small enough. Since  $|\nabla \eta_n|\leq \frac{\theta_2r(x)^{\theta_2-1}}{(nR)^{\theta_2}}$ and using (1.11), we obtain
               \begin{align*}
                   J_{p,4}\leq &\int\int_{E_{2^{1/\theta_2}nR} \setminus E_R}V^{-\frac{(a+1)(p-1)}{s-(p-1)(a+1)}}\left(\frac{\theta_2r(x)^{\theta_2-1}}{(nR)^{\theta_2}}\right)^{\frac{ps}{s-(a+1)(p-1)}} \varphi^{ \frac{ps}{s-(a+1)(p-1)}}d\mu dt
                   \\
                   \leq & C n^{ -\frac{C_1a\theta_2ps}{s-(p-1)(a+1)}}(nR)^{-\frac{ps\theta_2}{s-(p-1)(a+1)}}
                   \\
                   &\qquad \cdot\int\int_{E_{2^{1/\theta_2}nR} \setminus E_R}V^{-\frac{(a+1)(p-1)}{s-(p-1)(a+1)}}r(x)^{(\theta_2-1)p(\frac{s}{s-p+1}+\delta_p)}d\mu dt
                   \\
                   \leq & C n^{ -\frac{C_1a\theta_2ps}{s-(p-1)(a+1)}}(nR)^{-\frac{ps\theta_2}{s-(p-1)(a+1)}} (nR)^{\frac{\theta_2ps}{s-p+1}+C_0\delta_p}(\mathrm{log}\ (nR))^{\bar{s}_{4,p}}.
               \end{align*}
               By the definition of $C_1$, we have
                \begin{align*}
                    &-\frac{C_1a\theta_2ps}{s-(p-1)(a+1)}-\frac{ps\theta_2}{s-(p-1)(a+1)}+\frac{\theta_2ps}{s-p+1}+C_0\delta_p
                    \\
                    \leq & -\frac{pas}{(s-p+1)(s-(p-1)(a+1))}\leq -\frac{pas}{(s-p+1)^2}
                \end{align*}
                for $a$ small enough. Note that $R^a=e$, we find that
                \begin{align*}
                    J_{p,4}\leq C n^{-\frac{pas}{(s-p+1)^2} }(\mathrm{log}\ (nR))^{\bar{s}_{4,p}}.
                \end{align*}
                Therefore, we derive
                \begin{align*}
                    J_{p,2}\leq C(J_{p,3}+J_{p,4})\leq C(a^{\frac{(p-1)s}{s-(a+1)(p-1)}}+n^{-\frac{pas}{(s-p+1)^2} }(\mathrm{log}\ (nR))^{\bar{s}_{4,p}} ).
                \end{align*}
                Similarly, we have
                \begin{align*}
                    J_{q,2}\leq  C(a^{\frac{(q-1)s}{s-(a+1)(q-1)}}+n^{-\frac{qas}{(s-q+1)^2} }(\mathrm{log}\ (nR))^{\bar{s}_{4,q}} ).
                \end{align*}
                Note that (1.9) holds for $\epsilon=0$, by the same argument in estimating $J_{t,1}$, we deduce
                \begin{align*}
                    J_{t,2}=&\int\int_{S\setminus K}V^{-\frac{1}{s-1}}|\partial_t\varphi_n|^{\frac{s}{s-1}} d\mu dt\leq C( a^{\frac{1}{s-1}-\bar{s}_2}+n^{-\frac{C_1\theta_2a}{s-1}}(\mathrm{log}(nR))^{\bar{s}_2})\\
                    \leq &C(1+n^{-\frac{C_1\theta_2a}{s-1}}(\mathrm{log}(nR))^{\bar{s}_2}).
                \end{align*}
                Return to (4.4), we have
                \begin{align}
                    &\int_0^\infty\int_MVu^s\varphi_n^bd\mu dt
                   \nonumber\\
                   \leq& C(a^{-1}Q)^{\frac{p-1}{p}}\cdot \left(\int\int_{S\setminus K}Vu^s\varphi_n^bd\mu dt\right)^{\frac{(a+1)(p-1)}{sp}}\cdot J_{p,2} ^{\frac{s-(a+1)(p-1)}{sp}}
                   \nonumber\\
                    &+C(a^{-1}Q)^{\frac{q-1}{q}}\cdot \left(\int \int_{S\setminus K}Vu^s\varphi_n^bd\mu dt \right)^{\frac{(a+1)(q-1)}{sq}}\cdot J_{q,2}^{\frac{s-(a+1)(q-1)}{sq}}
                   \nonumber\\
                   &+C\left(\int \int_{S\setminus K}Vu^s\varphi_n^bd\mu dt \right)^{\frac{1}{s}}\cdot J_{t,2}^{\frac{s-1}{s}}.
                \end{align}
                This yields that there exists a constant $\mathrm{max}\{\frac{(a+1)(p-1)}{sp},\frac{1}{s}\}<\gamma<1$ such that
                \begin{align}
                    &1+\int_0^\infty\int_MVu^s\varphi_n^bd\mu dt
                    \nonumber\\
                    \leq & C(1+(a^{-1}Q)^{\frac{p-1}{p}} J_{p,2} ^{\frac{s-(a+1)(p-1)}{sp}}+(a^{-1}Q)^{\frac{q-1}{q}}J_{q,2}^{\frac{s-(a+1)(q-1)}{sq}}+J_{t,2}^{\frac{s-1}{s}})
                    \nonumber\\
                    & \qquad \cdot \left(1+\int \int_{S\setminus K}Vu^s\varphi_n^bd\mu dt \right)^\gamma,
                \end{align}
                that is,
                \begin{align}
                    &\left(1+\int_0^\infty\int_MVu^s\varphi_n^bd\mu dt\right)^{1-\gamma}
                    \nonumber\\
                    \leq & C(1+(a^{-1}Q)^{\frac{p-1}{p}} J_{p,2} ^{\frac{s-(a+1)(p-1)}{sp}}+(a^{-1}Q)^{\frac{q-1}{q}}J_{q,2}^{\frac{s-(a+1)(q-1)}{sq}}+J_{t,2}^{\frac{s-1}{s}}).
                \end{align}
               By the estimate of $J_{p,2}, J_{q,2}$ and $Q$, we have
               \begin{align}
                   &\limsup_{n\xrightarrow{}\infty}J_{p,2}\leq Ca^{\frac{(p-1)s}{s-(a+1)(p-1)}},
                   \\
                   &\limsup_{n\xrightarrow{}\infty}J_{q,2}\leq Ca^{\frac{(q-1)s}{s-(a+1)(q-1)}},
                   \\
                   &\limsup_{n\xrightarrow{}\infty} Q\leq C,
                   \\
                   &\limsup_{n\xrightarrow{}\infty} J_{t,2}\leq C,
               \end{align}
               for $a$ small enough. Thus, we have
               \begin{align*}
                   \limsup_{n\xrightarrow{}\infty}(1+(a^{-1}Q)^{\frac{p-1}{p}} J_{p,2} ^{\frac{s-(a+1)(p-1)}{sp}}+(a^{-1}Q)^{\frac{q-1}{q}}J_{q,2}^{\frac{s-(a+1)(q-1)}{sq}}+J_{t,2}^{\frac{s-1}{s}})\leq C.
               \end{align*}
               Upon on (4.10) and letting $n\xrightarrow{}\infty$, we obtain
               \begin{align*}
                   1+\int_0^\infty\int_MVu^s\varphi^bd\mu dt\leq C,
               \end{align*}
               that is,
               \begin{align*}
                   \int_0^\infty\int_MVu^s\varphi^bd\mu dt\leq C.
               \end{align*}
               From (4.8), (4.11)-(4.13) and letting $n\xrightarrow{}\infty$ again, by a similar argument, we deduce
               \begin{align*}
                   &\int\int_{E_R}Vu^sd\mu dt
                   \\
                   \leq &C\left(\int\int_{S\setminus E_R}Vu^s\varphi^bd\mu dt\right)^{\frac{(a+1)(p-1)}{sp}}+C\left(\int \int_{S\setminus E_R}Vu^s\varphi_n^bd\mu dt \right)^{\frac{(a+1)(q-1)}{sq}}
                   \\
                   & +C \left(\int \int_{S\setminus E_R}Vu^s\varphi_n^bd\mu dt \right)^{\frac{1}{s}}.
               \end{align*}
               Passing to the limit as $R \xrightarrow{}\infty$, we conclude that
               \begin{align*}
                   \int_0^\infty\int_{M}Vu^sd\mu dt=0,
               \end{align*}
               and thus $u=0$ a.e in $S$. {\qed}

               \section{Proof of Corollaries}
               Since the proof is similar, we only proof Corollary 1.9. 

               ~\\
         $\mathit{Proof\ of\ Corollary\ 1.9.}$ We only need to show that HP2 holds under the condition in Corollary 1.9. Firstly, for small $\epsilon>0$, we have
         \begin{align*}
             &\int\int_{E_{2^{1/\theta_2}R}\setminus  E_{R}} t^{(\theta_1-1)(\frac{s}{s-1}-\epsilon)} V^{-\bar{s}_2+\epsilon}d\mu dt 
             \\
             \leq & CR^{\frac{\theta_2}{\theta_1}(\theta_1-1)(\frac{s}{s-1}-\epsilon)}\int\int_{E_{2^{1/\theta_2}R}\setminus  E_{R}}  (f(t)h(x))^{-\bar{s}_2+\epsilon}d\mu dt
             \\
             \leq & CR^{\frac{\theta_2}{\theta_1}(\theta_1-1)(\frac{s}{s-1}-\epsilon)+\frac{\theta_2}{\theta_1}\alpha\epsilon+\beta\epsilon+\frac{\theta_2}{\theta_1}\sigma_2+\sigma_1} (\mathrm{log}\ R)^{\delta_1+\delta_2}.
         \end{align*}
         Hence, (1.9) holds if we choose a large $C_0$ and
         \begin{align}
             \frac{\theta_2}{\theta_1}(\sigma_2-\frac{s}{s-1})+\sigma_1\leq 0,\quad \delta_1+\delta_2\leq \bar{s}_2.
         \end{align}
         For $z\in \{p,q\}$, large enough $R$ and small enough $\epsilon$, we find that
         \begin{align*}
             &\int\int_{E_{2^{1/\theta_2}R}\setminus  E_{R}} r(x)^{(\theta_2-1)z(\frac{s}{s-z+1}-\epsilon)} V^{-\bar{s}_{4,z}+\epsilon}d\mu dt \\
             \leq& C R^{(\theta_2-1)z(\frac{s}{s-z+1}-\epsilon)}\int\int_{E_{2^{1/\theta_2}R}\setminus  E_{R}} (f(t)h(x))^{-\bar{s}_{4,z}+\epsilon}d\mu dt
             \\
             \leq & C R^{(\theta_2-1)z(\frac{s}{s-z+1}-\epsilon) +\frac{\theta_2}{\theta_1}\alpha\epsilon+\beta\epsilon+\frac{\theta_2}{\theta_1}\sigma_{4,z}+\sigma_{3,z}} (\mathrm{log}\ R)^{\delta_{3,z}+\delta_{4,z}}.
         \end{align*}
         Similarly, we have
         \begin{align*}
             &\int\int_{E_{2^{1/\theta_2}R}\setminus  E_{R}} r(x)^{(\theta_2-1)z(\frac{s}{s-z+1}+\epsilon)} V^{-\bar{s}_{4,z}-\epsilon}d\mu dt 
             \\
             \leq & C R^{(\theta_2-1)z(\frac{s}{s-z+1}+\epsilon)+\frac{\theta_2}{\theta_1}\alpha\epsilon+\beta\epsilon +\frac{\theta_2}{\theta_1}\sigma_{4,z}+\sigma_{3,z}} (\mathrm{log}\ R)^{\delta_{3,z}+\delta_{4,z}}.
         \end{align*}
         From the above estimates, (1.10) and (1.11) hold if we choose a large $C_0$ and
         \begin{align}
             \sigma_{3,z}-\frac{sz}{s-z+1}+\frac{\theta_2}{\theta_1}\sigma
             _{4,z}\leq 0,\quad \delta_{3,z}+\delta_{4,z}\leq \bar{s}_{4,z}.
         \end{align}
         Under the condition of Corollary 1.9,  we can choose suitable $\theta_2,\theta_1$ such that
         \begin{align*}
             \sigma_1\left(\frac{s}{s-1}-\sigma_2 \right)^{-1}\leq \frac{\theta_2}{\theta_1}\leq \sigma_{4,z}^{-1} ( \frac{sz}{s-z+1}-\sigma_{3,z})
         \end{align*}
         if 
         \begin{align*}
             \sigma_{4,z}>0,\quad \frac{s}{s-1}-\sigma_2>0,\quad \frac{sz}{s-z+1}-\sigma_{3,z}>0.
         \end{align*}
         This completes the proof. {\qed}

       \section{ Acknowledgments}
        The author would like to thank Professor Yuxin Dong and Professor Xiaohua Zhu for their continued support and encouragement.

\bibliographystyle{siam}
\bibliography{ref}

\end{document}